\theoremstyle{plain}
\newtheorem{theorem}                 {Theorem}      [section]
\newtheorem{corollary}    [theorem]  {Corollary}
\newtheorem{proposition}  [theorem]  {Proposition}
\theoremstyle{definition}
\newtheorem{definition}   [theorem]  {Definition}
\newtheorem{notation}     [theorem]  {Notation}
\newtheorem{remark}       [theorem]	 {Remark}
\newenvironment{example}
  {\pushQED{\qed}\examplex}
  {\popQED\endexamplex}
\numberwithin{equation}{section}
\numberwithin{table}{section}
\def \cn{{\mathbb C}}
\def \rn{{\mathbb R}}
\def \A{\mathcal A}
\def\nab#1#2{\hbox{$\nabla$\kern -.3em\lower 1.0 ex
		\hbox{$#1$}\kern -.1 em {$#2$}}}
\def \Re{\mathfrak R\mathfrak e\,}
\def \Im{\mathfrak I\mathfrak m\,}
\def \lb#1#2{[#1,#2]}
\def \g{\mathfrak g}
\def \h{\mathfrak h}
\def \r{\mathfrak r}
\def \GLR#1{\text{\bf GL}_{#1}(\rn)}
\def \glr#1{\mathfrak{gl}_{#1}(\rn)}
\def \tSL2{\widetilde{\text{\bf SL}}_{2}(\rn)}
\def \SO#1{\text{\bf SO}(#1)}
\def \SU#1{\text{\bf SU}(#1)}
\def \Sp#1{\text{\bf Sp}(#1)}
\newcommand{\lrpar}[1]{\left( #1 \right)}
\newcommand{\lrcurl}[1]{\left\{ #1 \right\}}
\DeclareMathOperator{\Aut}{Aut}
\DeclareMathOperator{\Div}{div}
\DeclareMathOperator{\trace}{Tr}
\DeclareMathOperator{\Exp}{Exp}
\DeclareMathOperator{\arsinh}{arsinh}
\DeclareMathOperator{\adj}{adj}
\newcommand{\transp}{\mathrm{T}}
\begin{document}

\title[$r$-Harmonic and Complex Isoparametric Functions]{$r$-Harmonic and Complex Isoparametric Functions\\on the Lie Groups $\rn^m \ltimes \rn^n$ and $\rn^m \ltimes \mathrm{H}^{2n+1}$}

\author{Sigmundur Gudmundsson}
\address{Mathematics, Faculty of Science\\
	University of Lund\\
	Box 118, Lund 221\\
	Sweden}
\email{Sigmundur.Gudmundsson@math.lu.se}

\author{Marko Sobak}
\address{Mathematics, Faculty of Science\\
	University of Lund\\
	Box 118, Lund 221\\
	Sweden}
\email{marko.sobak1@gmail.com}

\begin{abstract}
In this paper we introduce the notion of complex isoparametric functions on Riemannian manifolds.  These are then employed to devise a general method for constructing proper $r$-harmonic functions. We then apply this to construct the first known explicit proper $r$-harmonic functions on the Lie group semidirect products $\rn^m \ltimes \rn^n$ and $\rn^m \ltimes \mathrm{H}^{2n+1}$, where $\mathrm{H}^{2n+1}$ denotes the classical $(2n+1)$-dimensional Heisenberg group. In particular, we construct such examples on all the simply connected irreducible four-dimensional Lie groups.
\end{abstract}

\subjclass[2010]{31B30, 53C43, 58E20}

\keywords{Biharmonic functions, solvable Lie groups}

\maketitle

\section{Introduction}

Biharmonic functions are important in physics. Aside from continuum mechanics and elasticity theory, the biharmonic equation also makes an appearance in two-dimensional hydrodynamics problems involving Stokes flows of incompressible Newtonian fluids.  A comprehensive review of this fascinating history of biharmonic functions can be found in the article \cite{Mel}.

On this subject the literature is vast.  With only very few exceptions, the domains are either surfaces or open subsets of flat Euclidean space, see for example \cite{Bai-Far-Oua}. The development of the very last years has changed this and can be traced e.g. in the following publications:  \cite{Gud-13}, \cite{Gud-14}, \cite{Gud-15}, \cite{Gud-Mon-Rat-1}, \cite{Gud-Sif-1}, \cite{Gud-Sif-2}, \cite{Gud-Sob-1}.  There the authors develop methods for constructing explicit  $r$-harmonic functions on the classical Lie groups and even some symmetric spaces.
\smallskip

In this paper we introduce the notion of complex isoparametric functions on a Riemannian manifold $(M,g)$, see Definition \ref{def-isoparametric}.  It turns out that together with the so called eigenfunctions they provide us with a method for manufacturing complex-valued proper $r$-harmonic functions on $(M,g)$, see Section \ref{section-method}.

We then apply our new method to construct proper $r$-harmonic functions on the solvable Lie group semidirect products $\rn^m \ltimes \rn^n$ and $\rn^m \ltimes \mathrm{H}^{2n+1}$, where $\mathrm{H}^{2n+1}$ denotes the classical $(2n+1)$-dimensional Heisenberg group.
The study of these particular Lie groups is motivated by the fact that all four-dimensional irreducible Lie groups are, up to isomorphism, semidirect products of one of these two types.

\section{Preliminaries}\label{section-preliminaries}

Let $(M,g)$ be a smooth manifold equipped with a Riemannian metric $g$.  We complexify the tangent bundle $TM$ of $M$ to $T^{\cn}M$ and extend
the metric $g$ to a complex bilinear form on $T^{\cn}M$.  Then the
gradient $\nabla \phi$ of a complex-valued function $\phi:(M,g)\to\cn$ is a
section of $T^{\cn}M$.  In this situation, the well-known linear
{\it Laplace-Beltrami} operator (alt. {\it tension} field) $\tau$ on $(M,g)$
acts locally on $\phi$ as follows
$$
\tau(\phi)=\Div (\nabla \phi)=  \frac{1}{\sqrt{\det g}} \sum_{ij}
\frac{\partial}{\partial x_j}\left(g^{ij}\, \sqrt{\det g}\,
\frac{\partial \phi}{\partial x_i}\right).
$$
For two complex-valued functions $\phi,\psi:(M,g)\to\cn$ we have the
following well-known product rule
\begin{equation}\label{eq-product-rule}
\tau(\phi\cdot\psi)=\tau(\phi)\cdot\psi+2\,\kappa(\phi,\psi)+\phi\cdot\tau(\psi),
\end{equation}
where the {\it conformality} operator $\kappa$ is given by
$$
\kappa(\phi,\psi)=g(\nabla \phi,\nabla \psi).
$$
Moreover, if $f : U \subset \cn \to \cn$ is a holomorphic function defined on an open set $U$ containing $\phi(M)$, then we have the chain rule
\begin{equation}\label{eq-chain-rule}
\tau(f \circ \phi) = \kappa(\phi,\phi) \, f''(\phi) + \tau(\phi) \, f'(\phi).
\end{equation}

For a positive integer $r$ the iterated Laplace-Beltrami operator
$\tau^r$ is defined inductively by
$$
\tau^{0} (\phi)=\phi,\quad \tau^r (\phi)=\tau(\tau^{(r-1)}(\phi)).
$$
\begin{definition}\label{definition-proper-p-harmonic}
Let $r$ be a positive integer. Then a smooth complex-valued function $\phi:(M,g)\to\cn$ is said to be
\begin{enumerate}
\item[(a)] {\it $r$-harmonic} if $\tau^r (\phi)=0$,
\item[(b)] {\it proper $r$-harmonic} if $\tau^r (\phi)=0$ and
$\tau^{(r-1)}(\phi)$ does not vanish identically.
\end{enumerate}
\end{definition}

It should be noted that the {\it harmonic} functions are exactly the
$1$-harmonic and the {\it biharmonic} functions are the $2$-harmonic
ones. In some texts the $r$-harmonic functions are also called {\it polyharmonic} of order $r$.
We also note that if a function is $r$-harmonic then it is also $s$-harmonic for any $s \geq r$.
Hence, one is usually interested in studying functions which are {\it proper} $r$-harmonic.

\section{Complex Isoparametric Functions}\label{section-method}

A method for constructing proper $r$-harmonic functions on Riemannian manifolds has recently been developed in \cite{Gud-Sob-1}.
The $r$-harmonic functions that the authors consider are of the form $f \circ \phi$, where $f : U \subset \cn \to \cn$ is a holomorphic function 
defined on an open set $U$ containing $\phi(M)$, and
$\phi: (M,g) \to \cn$ is an {\it eigenfunction}
i.e.\ a smooth complex-valued function such that
\begin{equation}\label{eq-eigenfunction-def}
\tau(\phi) = \lambda \cdot \phi \quad\text{and}\quad \kappa(\phi,\phi) = \mu \cdot \phi^2,
\end{equation}
for some constants $\lambda, \mu \in \cn$.
The construction from \cite{Gud-Sob-1} can be adapted to the more general setting when $\phi$ is a complex isoparametric function, as we will now demonstrate.

Classically, isoparametric functions on Riemannian manifolds are defined as real-valued functions $\phi : (M,g) \to \rn$ such that the tension field $\tau$ and the conformality operator $\kappa$ satisfy
\begin{equation}\label{eq-real-isoparametric-def} 
\tau(\phi) = \Phi \circ \phi \quad\text{and}\quad \kappa(\phi, \phi) = \Psi \circ \phi,
\end{equation}
for some smooth functions $\Phi,\Psi$.
These have been extensively studied due to their beautiful geometric properties, see e.g.\ \cite{Tho}.
As we are mainly interested in complex-valued functions, the following complex-valued analogue of the classical real-valued iso\-parametric functions will turn out to be useful. 

\begin{definition}\label{def-isoparametric}
Let $(M,g)$ be a Riemannian manifold.
Then a smooth complex-valued function $\phi : M \to \cn$ is said to be \textit{complex isoparametric} on $M$ if 
there exist holomorphic functions $\Phi,\Psi : U \to\cn$ defined on some open set $U\subset \cn$ containing $\phi(M)$, such that
the tension field $\tau$ and the conformality operator $\kappa$ satisfy
\begin{equation}\label{eq-complex-isoparametric-def}
\tau(\phi) = \Phi \circ \phi \quad\text{and}\quad \kappa(\phi, \phi) = \Psi \circ \phi.
\end{equation}
\end{definition}

\begin{remark}
Note that the term \textit{complex isoparametric} is used here mainly for aesthetical reasons, as the defining conditions (\ref{eq-real-isoparametric-def}) for real isoparametric functions and (\ref{eq-complex-isoparametric-def}) for complex ones are identical.
Despite this similarity, there does not seem to be a direct relation between these two concepts.
E.g.\ for a complex-valued function $\phi: M \to \cn$ to be complex isoparametric, it is neither necessary nor sufficient that its real and imaginary parts are real isoparametric.
This is caused not only by the restrictive condition of $\Phi$ and $\Psi$ being holomorphic, but also by the fact that the relation for the conformality operator may fail to be satisfied, as demontrated by the functions 
$$\rn \ni t \mapsto e^{(a+\mathrm ib)t} \quad\text{and}\quad \rn \ni t \mapsto e^{at} + \mathrm ie^{bt},\qquad a,b\in \rn.$$
Indeed, the former is complex isoparametric but its real and imaginary parts are not real isoparametric, while the latter is not complex isoparametric when $a\not= b$ but its real and imaginary parts are real isoparametric.
\end{remark}

\begin{remark}
We also note that other different definitions of complex-valued isoparametric functions have been proposed.
For example, in \cite{Bai} the author defines a function $\phi:M \to \cn$ to be complex isoparametric if
\begin{equation*}
\tau(\phi) = \Phi \circ \phi \quad\text{and}\quad |d\phi|^2 = \kappa(\phi,\bar\phi) = \Psi \circ \phi
\end{equation*}
for a smooth complex-valued function $\Phi$ and a smooth real-valued function $\Psi$. 
This definition is less restrictive than ours due to the weaker regularity assumptions on $\Phi$ and $\Psi$, but it is also essentially different because of the assumed condition for the conformality operator $\kappa$. Note that the value of $\kappa(\phi,\phi)$ contains no information about the value of $\kappa(\phi,\bar\phi)$, and vice versa.
\end{remark}

A plethora of examples of eigenfunctions (which are a fortiori complex isoparametric) on classical semisimple Lie groups can be found in Table 1 of \cite{Gud-Sob-1}.
Examples of complex isoparametric functions which are not eigenfunctions shall be studied later in this work, see Proposition \ref{lemma-phi}.

\medskip

As in \cite{Gud-Sob-1}, the gist of constructing $r$-harmonic functions using 
isoparametric functions is to study compositions of the form $f \circ \phi$,
where $\phi : (M,g) \to \cn$ is complex isoparametric and $f: U \to \cn$ is a holomorphic function defined on some open set $U \subset \cn$ containing $\phi(M)$.
By the chain rule (\ref{eq-chain-rule}), the tension field of such compositions satisfies
\begin{equation*}
\tau(f \circ \phi) = \kappa(\phi,\phi)\, f''(\phi) + \tau(\phi)\, f'(\phi) = (\Psi \, f'' + \Phi \, f') \circ \phi.
\end{equation*}
With this in hand, one can first obtain a harmonic function $f_1 \circ \phi$ by solving
\begin{equation*}
\tau(f_1 \circ \phi) = 0,
\end{equation*}
which, by the chain rule, reduces to the complex ordinary differential equation
\begin{equation*}
\Psi\,f_1'' + \Phi\,f_1' = 0.
\end{equation*}
Continuing inductively, we assume that a proper $(r-1)$-harmonic function of the form $f_{r-1} \circ \phi$ has already been constructed.
We can then obtain a proper $r$-harmonic function $f_r \circ \phi$ by solving the problem
\begin{equation*}
\tau(f_r \circ \phi) = f_{r-1} \circ \phi,
\end{equation*}
which again reduces to a complex ordinary differential equation, namely
\begin{equation*}
\Psi\, f_r'' + \Phi\, f_r' = f_{r-1}.
\end{equation*}
More explicitly, we get the following result.

\begin{theorem}\label{thm-p-harmonic-main}
Let $(M,g)$ be a Riemannian manifold and $\phi : M \to \cn$ be a complex isoparametric function on $M$ with
\begin{equation*}
\tau(\phi) = \Phi \circ \phi \quad\text{and}\quad \kappa(\phi,\phi) = \Psi \circ \phi,
\end{equation*}
for some holomorphic functions $\Phi,\Psi : U \to\cn$ defined on an open set $U$ of $\cn$ containing $\phi(M)$.
Suppose that one of the following situations holds.
\begin{enumerate}
\item If $\Psi$ vanishes identically, let  $\hat{U}$ be an open simply connected subset of \newline ${U \setminus \Phi^{-1}(\{0\})}$ 
and define the holomorphic functions ${f_r : \hat{U} \to \cn}$ for $r \geq 1$ by
\begin{equation*}
f_r(z) = c \lrpar{\int^z \frac{\mathrm d\zeta}{\Phi(\zeta)}}^{r-1},
\end{equation*}
where $c \in \cn$ is non-zero.
\item If $\Psi$ does not vanish identically, let $\hat{U}$ be an open simply connected subset of $U \setminus \Psi^{-1}(\{0\})$, put
\begin{equation*}
\Lambda(z) = \exp\lrpar{-\int^z \frac{\Phi(\zeta)}{\Psi(\zeta)} \,\mathrm d\zeta}, \quad z \in \hat{U}
\end{equation*}
and define the holomorphic functions ${f_r : \hat{U} \to \cn}$ for $r \geq 1$ by
\begin{align*}
f_1(z) &= c_1 \int^z \Lambda(\zeta) \, \mathrm d\zeta + c_2,\\[0.1cm]
f_r(z) &= \int^z \Lambda(\eta) \int^\eta  \frac{f_{r-1}(\zeta)}{\Lambda(\zeta)\,\Psi(\zeta)} \, \mathrm d\zeta \,\mathrm d\eta, \quad r > 1,
\end{align*}
where $c = (c_1, c_2) \in \cn^2$ is non-zero.
\end{enumerate}
Then in both cases, the composition
\begin{equation*}
f_r \circ \phi : \phi^{-1} (\hat{U}) \to \cn
\end{equation*}
is proper $r$-harmonic on its open domain $\phi^{-1} (\hat{U})$ in $M$ for all $r \geq 1$. 
\end{theorem}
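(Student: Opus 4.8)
The plan is to convert the iterated application of the tension field on compositions $f\circ\phi$ into an iterated application of a single ordinary differential operator, and then to read off the two cases as explicit solutions of the resulting linear ODEs. To this end I would introduce the second order linear operator
$$
T[f] = \Psi\,f'' + \Phi\,f'
$$
acting on holomorphic functions $f:\hat U\to\cn$. The chain rule (\ref{eq-chain-rule}) together with the defining relations $\tau(\phi)=\Phi\circ\phi$ and $\kappa(\phi,\phi)=\Psi\circ\phi$ gives at once $\tau(f\circ\phi) = T[f]\circ\phi$. Since $\Phi$ and $\Psi$ are holomorphic, $T$ preserves holomorphicity, so a straightforward induction on $k$ (applying the chain rule to the holomorphic function $T^{k}[f]$ at each step) yields
$$
\tau^{k}(f\circ\phi) = T^{k}[f]\circ\phi, \qquad k\geq 0.
$$
Thus the whole statement reduces to producing, on the domain $\hat U$, holomorphic functions $f_r$ with $T^{r}[f_r]=0$ and $T^{r-1}[f_r]\not\equiv 0$, and then to checking that $T^{r-1}[f_r]\circ\phi$ does not vanish identically. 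I note that the simple connectedness of $\hat U$ and the removal of the zero sets of $\Phi$, respectively $\Psi$, are exactly what guarantee that the primitives appearing in the definitions of the $f_r$ are single-valued holomorphic functions on $\hat U$.

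For the first case I would exploit that when $\Psi\equiv 0$ the operator degenerates to $T[f]=\Phi\,f'$. Writing $w(z)=\int^{z}\mathrm d\zeta/\Phi(\zeta)$, so that $w'=1/\Phi$, one has the clean identity $T[w^{m}] = \Phi\,(w^{m})' = m\,w^{m-1}$. Hence $T$ acts on powers of $w$ exactly as the derivative $\mathrm d/\mathrm dw$, and an immediate induction gives $T^{k}[c\,w^{r-1}] = c\,\tfrac{(r-1)!}{(r-1-k)!}\,w^{r-1-k}$ for $k\leq r-1$. With $f_r=c\,w^{r-1}$ this shows $T^{r}[f_r]=0$ while $T^{r-1}[f_r]=c\,(r-1)!$ is a nonzero constant; in particular $T^{r-1}[f_r]\circ\phi$ is a nonzero constant function and properness is automatic.

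For the second case the function $\Lambda=\exp(-\int^{z}\Phi/\Psi)$ satisfies $\Lambda'=-(\Phi/\Psi)\Lambda$, equivalently the key identity $\Psi\Lambda'+\Phi\Lambda=0$. From $f_1'=c_1\Lambda$ one computes directly $T[f_1]=c_1(\Psi\Lambda'+\Phi\Lambda)=0$, so $f_1$ lies in the kernel of $T$. For $r>1$ I would set $f_r'=\Lambda\,h$ with $h(\eta)=\int^{\eta}f_{r-1}/(\Lambda\Psi)$, so that $h'=f_{r-1}/(\Lambda\Psi)$; substituting into $T$ and using $\Psi\Lambda'+\Phi\Lambda=0$ collapses all terms except one and yields $T[f_r]=(\Psi\Lambda'+\Phi\Lambda)h+f_{r-1}=f_{r-1}$. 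This is precisely the integrating-factor solution of the first order equation for $f_r'$ obtained after substituting into $\Psi f_r''+\Phi f_r'=f_{r-1}$. Telescoping the relations $T[f_1]=0$ and $T[f_r]=f_{r-1}$ then gives $T^{r-1}[f_r]=f_1$ and $T^{r}[f_r]=0$, as required.

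The step I expect to be the main obstacle is not the differentiation but the verification of properness in the second case, i.e.\ that $\tau^{r-1}(f_r\circ\phi)=f_1\circ\phi$ does not vanish identically. Here $f_1=c_1\int^{z}\Lambda+c_2$ is a nonzero holomorphic function, since $(c_1,c_2)\neq 0$ and $\Lambda$ is nowhere vanishing: if $c_1=0$ it is a nonzero constant, and if $c_1\neq 0$ then $f_1'=c_1\Lambda\neq 0$ forces $f_1$ to be nonconstant. As $f_1$ is holomorphic and not identically zero on the connected set $\hat U$, its zero set is discrete; since the isoparametric function $\phi$ is nonconstant, the image of any connected component of $\phi^{-1}(\hat U)$ on which $\phi$ is nonconstant cannot be contained in this discrete set, and therefore $f_1\circ\phi\not\equiv 0$. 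This completes the argument that $f_r\circ\phi$ is proper $r$-harmonic on $\phi^{-1}(\hat U)$.
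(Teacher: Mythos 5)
Your reduction coincides with the paper's own proof: the discussion preceding Theorem \ref{thm-p-harmonic-main} \emph{is} the paper's argument, and it consists of exactly the two ingredients you use, namely the chain rule (\ref{eq-chain-rule}) in the form $\tau(f\circ\phi)=(\Psi\,f''+\Phi\,f')\circ\phi$, and the observation that the stated $f_r$ are the integrating-factor solutions of the recursion $\Psi f_1''+\Phi f_1'=0$, $\Psi f_r''+\Phi f_r'=f_{r-1}$. Your computations are all correct: in case (1) the identity $T[w^m]=m\,w^{m-1}$ with $w(z)=\int^z \mathrm d\zeta/\Phi(\zeta)$, and in case (2) the identities $\Psi\Lambda'+\Phi\Lambda=0$, $T[f_1]=0$ and $T[f_r]=f_{r-1}$, whence $T^{r-1}[f_r]=f_1$ and $T^{r}[f_r]=0$; moreover the induction $\tau^k(f\circ\phi)=T^k[f]\circ\phi$ is legitimate precisely because $T$ preserves holomorphy on $\hat U$, as you note.

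There is, however, a genuine gap in the last step of your properness argument for case (2) --- a point which, to be fair, the paper itself does not spell out. You invoke ``since the isoparametric function $\phi$ is nonconstant'': this is not a hypothesis of the theorem, and even granted globally it is not what your argument needs. What is needed is that $\phi$ is nonconstant on at least one connected component of $\phi^{-1}(\hat U)$; a globally nonconstant $\phi$ could a priori be constant on every such component, with the constant values lying in the discrete zero set of $f_1$, in which case $f_1\circ\phi$ would vanish identically and properness would fail. The missing observation is that the isoparametric relations themselves, combined with the definition of $\hat U$, forbid this: if $\phi$ were constant, say $\equiv z_0\in\hat U$, on a (necessarily open) component $C$, then $\nabla\phi=0$ on $C$, so $\Psi(z_0)=\kappa(\phi,\phi)=0$ and $\Phi(z_0)=\tau(\phi)=0$ there; but $\hat U$ was chosen inside $U\setminus\Psi^{-1}(\{0\})$ in case (2) (inside $U\setminus\Phi^{-1}(\{0\})$ in case (1)), a contradiction. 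Hence $\phi$ is nonconstant on every component of $\phi^{-1}(\hat U)$, the image of each component is a connected set containing more than one point, and your discreteness argument then applies verbatim. With this one insertion your proof is complete and matches the paper's construction.
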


\begin{remark}
If the isoparametric function $\phi$ is real-valued, then one can weaken the assumptions in Theorem \ref{thm-p-harmonic-main} 
by only requiring that $\Phi$ and $\Psi$ are smooth functions of a real variable and $\hat{U}$ can be taken to be an interval. In the complex-valued case, the requirement that $\hat{U}$ is simply connected  is needed to ensure that the holomorphic antiderivatives are well-defined.
\end{remark}

Upon applying Theorem \ref{thm-p-harmonic-main} to the particular case when the isoparametric function $\phi$ is an eigenfunction on $(M,g)$ 
satisfying (\ref{eq-eigenfunction-def}), one sees that the composition $f_r \circ \phi$ is $r$-harmonic if
\begin{equation}\label{eq-p-harmonic-eigenfunction}
f_r(z)= 
\begin{cases}
c\log(z)^{r-1} 																& \text{if }\; \mu = 0, \; \lambda \not= 0\\[0.2cm]
c_1\log(z)^{2r-1}+ c_{2}\log(z)^{2r-2}			 							& \text{if }\; \mu \not= 0, \; \lambda = \mu\\[0.2cm]
c_1 z^{1-\frac\lambda{\mu}}\log(z)^{r-1} + c_2 \log(z)^{r-1}				& \text{if }\; \mu \not= 0, \; \lambda \not= \mu
\end{cases}
\end{equation}
which has already been obtained in the aforementioned paper \cite{Gud-Sob-1}.

\section{The Semidirect Products $\rn^m \ltimes \rn^n$ and $\rn^m \ltimes \mathrm{H}^{2n+1}$}

Low-dimensional Lie groups, particularly of dimension three and four, are of great importance in physics.
Probably most notably such Lie groups are used as models of spacetime in the theory of general relativity.
By Lie's third theorem, there is a bijection between real finite-dimensional Lie algebras and connected simply connected Lie groups of the same dimension.
Conveniently, real Lie algebras of dimension less than or equal to six have been classified,
see e.g.\ the recent book \cite{Sno-Win}.
As a consequence, one also obtains classifications of low-dimensional connected simply connected Lie groups.

The study conducted in this paper was initially meant to be a study of $r$-harmonic functions on the four-dimensional connected simply connected Lie groups.
For our purposes, only the Lie groups whose Lie algebras are indecomposable, i.e.\ not direct products of lower dimensional Lie algebras, are of interest.
The reason for this is that the Lie groups whose Lie algebras are decomposable are themselves direct products of lower dimensional Lie groups,
and the theory of $p$-harmonic functions on product manifolds is well-known, see e.g.\ \cite{Gud-14}.

According to the classification given for example in \cite{Pop-Boy-etal} and \cite{Big-Rem}, all four-dimensional indecomposable real Lie algebras are semidirect products of one of the following types:
\begin{equation*}
\r \ltimes \r^3, \quad \r^2 \ltimes \r^2, \quad \r \ltimes \h^{3},
\end{equation*}
where $\r^n$ denotes the $n$-dimensional abelian algebra and $\h^{2n+1}$ denotes the $(2n+1)$-dimensional Heisenberg algebra.
For the reader's convenience we list these Lie algebras in Table \ref{table:4dimalgebras}.
We thus see that the corresponding four-dimensional connected simply connected Lie groups are semidirect products of the form
\begin{equation*}
\rn \ltimes \rn^3, \quad \rn^2 \ltimes \rn^2, \quad \rn \ltimes \mathrm{H}^{3},
\end{equation*}
where $\mathrm{H}^{2n+1}$ is the $(2n+1)$-dimensional Heisenberg group.
Linear representations of these four-dimensional Lie groups can be found in Table \ref{table-groups-4dim}.
This motivates our interest in studying the more general semidirect products $$\rn^m \ltimes \rn^n \quad\text{and}\quad \rn^m \ltimes \mathrm{H}^{2n+1}.$$
In particular, we note that such semidirect products are automatically solvable and hence diffeomorphic to the vector space of the corresponding dimension.

\subsection{The Semidirect Products $\rn^m \ltimes \rn^n$}

Let $\mu : \rn^m \to \Aut(\rn^n) = \GLR n$ be the smooth homomorphism
\begin{equation*}
\mu(t) = \Exp \lrpar{ \sum_{k=1}^m A_kt_k },
\end{equation*}
for some family $\A = (A_k)_{k=1}^m$ of commuting matrices in $\glr n$.
Then the semidirect product $\rn^m \ltimes_\mu \rn^n$ is by definition
the smooth manifold $\rn^m \times \rn^n$ equipped with the Lie group operation
\begin{equation*}
(t,x)(s,y) = (t+s, \, x+\mu(t)y), \quad (t,x),(s,y) \in \rn^m \times \rn^n.
\end{equation*}
Note that in this case, the family $\A$ completely determines the semidirect product $\rn^m \ltimes_\mu \rn^n$,
so that it is natural to use the more suggestive notation $\rn^m \ltimes_\A \rn^n$.

The Lie group $\rn^m \ltimes_\A \rn^n$ has a faithful linear representation as the matrix group
\begin{equation*}
\big\{ \begin{bmatrix}
\mu(t) & x & 0\\
0 & 1 & 0 \\
0 & 0 & \Exp(\sum_{k} D_k t_k)
\end{bmatrix} \,\mid\, (t,x) \in \rn^m \times \rn^n \big\},
\end{equation*}
where $(D_k)_{ij} = \delta_{ik}\delta_{jk}$.
The $m$-dimensional block
\begin{equation*}
\Exp\lrpar{\sum_{k=1}^m D_k t_k}
\end{equation*}
of the representation is needed in general since $\mu$ may not be injective, but (parts of) this block may often be omitted.
This linear representation induces a natural basis for the corresponding Lie algebra $\r^m \ltimes_\A \r^n$, namely that  consisting of the elements
\begin{equation*}
\frac{\partial}{\partial t_k}\bigg|_0
=
\begin{bmatrix}
A_k & 0 & 0 \\
0 &  0 & 0\\
0 & 0 & D_k
\end{bmatrix},
\quad
\frac{\partial}{\partial x_i}\bigg|_0
=
\begin{bmatrix}
0 & e_i & 0\\
0 & 0 & 0 \\
0 & 0 & 0
\end{bmatrix},
\end{equation*}
where $1 \leq k \leq m, \; 1 \leq i \leq n$, and $e_i$ are the canonical unit vectors in $\rn^n$.

We now define an inner product on the Lie algebra $\r^m \ltimes_\A \r^n$ by requiring that this basis becomes orthonormal and we extend this inner product to a left-invariant Riemannian metric on the Lie group $\rn^m \ltimes_\A \rn^n$.
A simple calculation shows that, in the coordinates $(t,x) \in \rn^m \times \rn^n$, this metric is given by
\begin{equation*}
g_{(t,x)} = \begin{bmatrix}
I_m & 0 \\
0 & \mu(-t)^\transp \mu(-t)
\end{bmatrix}.
\end{equation*}

It is easy to see that if $\phi, \psi : (\rn^m \ltimes_\A \rn^n, g) \to \cn$ are two complex-valued functions then the Laplace-Beltrami operator satisfies
\begin{equation}\label{eq-rm*rn-lap}
\tau(\phi)
=  \sum_{k=1}^m\lrpar{\frac{\partial^2\phi}{\partial t_k^2} - \trace(A_k) \,\frac{\partial\phi}{\partial t_k}}
+ \sum_{i,j=1}^n (\mu(t)\,\mu(t)^\transp)_{ij} \, \frac{\partial^2 \phi}{\partial x_i \partial x_j},
\end{equation}
while the conformality operator is given by
\begin{equation}\label{eq-rm*rn-conf}
\kappa(\phi, \psi) = \sum_{k=1}^m \frac{\partial\phi}{\partial t_k}\frac{\partial\psi}{\partial t_k}
+ \sum_{i,j=1}^n  (\mu(t)\,\mu(t)^\transp)_{ij} \frac{\partial\phi}{\partial x_i}\frac{\partial\psi}{\partial x_j}.
\end{equation}

\medskip

\subsection{The Semidirect Products $\rn^m \ltimes \mathrm{H}^{2n+1}$}

Throughout this section we let $J_{2n}$ denote the block diagonal $2n \times 2n$ matrix defined by
\begin{equation*}
J_{2n} x = (-x_2, x_1, \ldots, -x_{2n}, x_{2n-1}).
\end{equation*}
Note that this is the standard complex structure on $\rn^{2n} \cong \cn^n$.

We recall that the Heisenberg group $\mathrm{H}^{2n+1}$ can be seen as the space $\rn \times \rn^{2n}$ equipped with the Lie group operation
\begin{equation*}
(\xi, x) \boxplus (\eta, y) = (\xi + \eta + \tfrac{1}{2}\langle J_{2n} x, y \rangle, \, x+y).
\end{equation*}
Its Lie algebra $\h^{2n+1}$ is nilpotent and has one-dimensional center.
More explicitly for a basis $\{\Xi, X_1, \ldots, X_{2n}\}$ of $\h^{2n+1}$ the non-zero Lie brackets are given by
\begin{equation*}
[X_{2i-1},X_{2i}] = \Xi, \quad 1\leq i \leq n.
\end{equation*}

In what follows, we consider semidirect products $\rn^m \ltimes_{\hat\mu} \mathrm{H}^{2n+1}$ with respect to a specific class of homomorphisms
 $\hat\mu : \rn^m \to \Aut(\mathrm{H}^{2n+1})$.
The reader interested in more details about this can find them in \cite{Sob-MSc}.
Let $\hat\mu : \rn^m \to \Aut(\mathrm{H}^{2n+1})$ be the smooth homomorphism
\begin{equation*}
\hat\mu(t) = \begin{bmatrix}
a(t) & 0 \\
0 & \mu(t)
\end{bmatrix},
\end{equation*}
where $a : \rn^m \to \rn$ and $\mu : \rn^m \to \GLR{2n}$ are given by
\begin{equation*}
a(t) =  \exp\lrpar{\frac{1}{n}\sum_{k=1}^m \trace(A_k)t_k} \quad\text{and}\quad \mu(t) = \Exp\lrpar{\sum_{k=1}^m A_kt_k},
\end{equation*}
for some family $\A = (A_k)_{k=1}^m$ of commuting matrices in $\glr{2n}$ of the form
\begin{equation}\label{eq-heisenberg-der-A}
{\small
\begin{bmatrix}
A_{(1,1)} & -\adj(A_{(2,1)}) & - 	\adj(A_{(3,1)}) & \ldots & -\adj(A_{(n,1)})\\
A_{(2,1)} & A_{(2,2)} & -\adj(A_{(3,2)}) & \ldots & -\adj(A_{(n,2)})\\
A_{(3,1)} & A_{(3,2)} & A_{(3,3)} & \ldots & -\adj(A_{(n,3)}) \\
\vdots & & & \ddots & \vdots\\
A_{(n,1)} & A_{(n,2)} & A_{(n,3)} & \ldots & A_{(n,n)}
\end{bmatrix}
},
\end{equation}
where $A_{(i,j)} \in \rn^{2\times 2}$ are such that $\trace A_{(i,i)} = a$ for $1 \leq i \leq n$, and $\adj(A_{(i,j)})$ denotes
the adjugate i.e.\ the transpose of the cofactor matrix of $A_{(i,j)}$.
In this situation, the Lie group semidirect product $\rn^m \ltimes_{\hat\mu} \mathrm{H}^{2n+1}$ is the manifold $\rn^m \times \rn \times \rn^{2n}$
equipped with the Lie group operation
\begin{align*}
(t,\xi,x) \, (s,\eta,y) &= \big(t+s, \, (\xi,x)\boxplus(a(t) \eta, \mu(t)y)\big)\\[0.1cm]
&= \big(t+s, \; \xi + a(t)\eta + \tfrac{1}{2}\langle J_{2n} x, \mu(t)y \rangle, \; x + \mu(t)y \big),
\end{align*}
where $(t,\xi,x) , (s,\eta,y) \in \rn^m \times \rn \times \rn^{2n}$.
As in the previous section, such semidirect products are completely determined by the family $\A$,
so that it is natural to use the more suggestive notation $\rn^m \ltimes_\A \mathrm{H}^{2n+1}$.
A faithful linear representation of the Lie group $\rn^m \ltimes_\A \mathrm{H}^{2n+1}$ is given by the matrix group
\begin{equation*}
\big\{\begin{bmatrix}
a(t) & \frac{1}{2}(J_{2n} x)^\transp \mu(t) & \xi & 0\\
0 & \mu(t) & x & 0\\
0 & 0 & 1 & 0 \\
0 & 0 & 0 & \Exp(\sum_k D_kt_k)
\end{bmatrix}
\,\mid\,
(t,\xi,x) \in \rn^m \times \rn \times \rn^{2n}
\big\},
\end{equation*}
where as before $(D_k)_{ij} = \delta_{ik}\delta_{jk}$.
Just as in the previous section, the $m$-dimensional block 
\begin{equation*}
\Exp\lrpar{\sum_{k=1}^m D_k t_k}
\end{equation*}
of the representation is needed in general, but (parts of) it can often be removed.

We equip the Lie group $\rn^m \ltimes_\A \mathrm{H}^{2n+1}$ with the left-invariant Riemannian metric induced by the inner 
product on its Lie algebra $\r^m \ltimes_\A \h^{2n+1}$ defined by requiring that the basis
\begin{equation*}
\small
\frac{\partial}{\partial t_k}\bigg|_0 = \begin{bmatrix}
\frac{1}{n}\trace(A_k) & 0 & 0 & 0\\
0 & A_k & 0 & 0\\
0 & 0 & 0 & 0 \\
0 & 0 & 0 & D_k
\end{bmatrix},
\end{equation*}
\begin{equation*}
\frac{\partial}{\partial \xi}\bigg|_0 = \begin{bmatrix}
0 & 0 & 1 & 0\\
0 & 0_{2n} & 0 & 0\\
0 & 0 & 0 & 0 \\
0 & 0 & 0 & 0_{m}
\end{bmatrix},
\quad
\frac{\partial}{\partial x_i}\bigg|_0 = \begin{bmatrix}
0 & 0 & 0 & 0\\
0 & 0_{2n} & e_i & 0\\
0 & 0 & 0 & 0 \\
0 & 0 & 0 & 0_{m}
\end{bmatrix}
\end{equation*}
on $\r^m \ltimes_\A \h^{2n+1}$ becomes orthonormal.
An elementary but long calculation shows that this metric can be expressed in the coordinates $(t,\xi,x) \in \rn^m \times \rn \times \rn^{2n}$ as
\begin{align*}
g_{(t,\xi,x)} 
&=
\begin{bmatrix}
I_m & 0 & 0 \\
0 & 1 & 0\\
0 & -\frac{1}{2}J_{2n} x & I_{2n}
\end{bmatrix}
\begin{bmatrix}
I_m & 0 & 0\\
0 &  a(-t)^2 & 0\\
0 & 0 & \mu(-t)^\transp \mu(-t)
\end{bmatrix}
\begin{bmatrix}
I_m & 0 & 0 \\
0 & 1 & -\frac{1}{2}(J_{2n} x)^\transp\\
0 & 0 & I_{2n}
\end{bmatrix}\\[0.1cm]
&=
\begin{bmatrix}
I_m & 0 & 0\\
0 & 0 & 0\\
0 & 0 & \mu(-t)^\transp \mu(-t)
\end{bmatrix}
+
a(-t)^2
\begin{bmatrix}
0_m & 0 & 0\\[0.1cm]
0 & 1 & -\frac{1}{2}(J_{2n} x)^\transp\\[0.1cm]
0 & -\frac{1}{2} J_{2n}x & \frac{1}{4}J_{2n} x(J_{2n} x)^\transp
\end{bmatrix}.
\end{align*}

With this in hand, it is easy to see that if $\phi,\psi : (\rn^m \ltimes_\A \mathrm{H}^{2n+1},g) \to \cn$ are two complex-valued functions, then
the Laplace-Beltrami operator satisfies
\begin{align}\label{eq-rm*heisenberg-lap}
\tau(\phi)
&= \sum_{k=1}^m \lrpar{ \frac{\partial^2 \phi}{\partial t_k^2} - \frac{n+1}{n}\,\trace(A_k) \, \frac{\partial \phi}{\partial t_k} }
  + \lrpar{a(t)^2 + \frac{1}{4} \langle \mu(t)^\transp J_{2n} x,\mu(t)^\transp J_{2n} x \rangle} \frac{\partial^2 \phi}{\partial \xi^2}\nonumber\\
& \quad + \sum_{i=1}^{2n} (\mu(t)\,\mu(t)^\transp J_{2n} x)_i \, \frac{\partial^2 \phi}{\partial \xi \partial x_i}
+ \sum_{i,j=1}^{2n} (\mu(t) \, \mu(t)^\transp)_{ij} \, \frac{\partial^2 \phi}{\partial x_i \partial x_j}.
\end{align}
and  the conformality operator satisfies
\begin{align}\label{eq-rm*heisenberg-conf}
\kappa(\phi,\psi) &= \sum_{k=1}^m \frac{\partial \phi}{\partial t_k} \frac{\partial \psi}{\partial t_k}
+ \lrpar{a(t)^2 + \frac{1}{4} \langle \mu(t)^\transp J_{2n} x,\mu(t)^\transp J_{2n} x \rangle } \frac{\partial \phi}{\partial \xi}\frac{\partial \psi}{\partial \xi} \nonumber\\
&+ \frac{1}{2}\sum_{i=1}^{2n} (\mu(t)\,\mu(t)^\transp J_{2n} x)_i
\lrpar{ \frac{\partial \phi}{\partial \xi}\frac{\partial \psi}{\partial x_i} + \frac{\partial \phi}{\partial x_i}\frac{\partial \psi}{\partial \xi} } \\
&+ \sum_{i,j=1}^{2n} (\mu(t) \, \mu(t)^\transp)_{ij} \,\frac{\partial \phi}{\partial x_i} \frac{\partial \psi}{\partial x_j}, \nonumber
\end{align}

\medskip

\section{$r$-Harmonic Functions on $\rn^m \ltimes \rn^n$ and $\rn^m \ltimes \mathrm{H}^{2n+1}$}

As we have seen in the preceding sections,
there are a lot of similarities between the Laplace-Beltrami operators and the conformality operators on the Lie groups
$\rn^m \ltimes \rn^n$ and $\rn^m \ltimes \mathrm{H}^{2n+1}$ so that here we will study these in parallel.
To simplify the statements of our results we fix the following notation.

\begin{notation}
Let $G$ be a Lie group, either $\rn^n$ or $\mathrm{H}^{2n+1}$.
If $G = \rn^n$ the indices $i,j$ are assumed to satisfy $1\le i,j\le n$ and in the case $G = \mathrm{H}^{2n+1}$ the condition $1\le i,j\le 2n$. In both cases we have $1 \leq k \leq m$, unless otherwise specified.
Throughout this section $\A = (A_k)_k$ will denote a commuting family of inveritable real matrices which are of dimensions $n\times n$ if $G = \rn^n$
and of dimensions $2n\times 2n$ if $G = \mathrm{H}^{2n+1}$.
In the latter case, we also assume that each member of the family $\A$ is of the form (\ref{eq-heisenberg-der-A}).
Finally, we will use the following notation
\begin{equation}\label{eq-omega-def}
\rn^m \ni \omega = \begin{cases}
(\trace(A_1),\ldots,\trace(A_m)) & \text{if } G=\rn^n,\\[0.2cm]
\frac{n+1}{n} (\trace(A_1),\ldots,\trace(A_m)) & \text{if } G =\mathrm{H}^{2n+1}. 
\end{cases}
\end{equation}

Now if $\phi,\psi : \rn^m \ltimes_\A G \to \cn$ depend only on the coordinates $t$ and $x$, i.e.\ if they are independent of $\xi$, in the case when $G = \mathrm{H}^{2n+1}$, then the Laplace-Beltrami operator satisfies
\begin{equation*}
\tau(\phi) = \sum_k \lrpar{ \frac{\partial^2 \phi}{\partial t_k^2} - \omega_k\, \frac{\partial \phi}{\partial t_k} }
+ \sum_{ij} (\mu(t)\,\mu(t)^\transp)_{ij} \, \frac{\partial^2 \phi}{\partial x_i \partial x_j}
\end{equation*}
and the conformality operator is given by
\begin{equation*}
\kappa(\phi,\psi)
= \sum_k \frac{\partial\phi}{\partial t_k}\frac{\partial\psi}{\partial t_k} + \sum_{ij} (\mu(t)\,\mu(t)^\transp)_{ij} \,\frac{\partial \phi}{\partial x_i}\frac{\partial \psi}{\partial x_j},
\end{equation*}
regardless of the choice of the Lie group $G$.
\end{notation}

Our first result is now the following one of general variable separation.

\begin{proposition}\label{prop-separation-1}
Let $\phi,\psi : \rn^m \ltimes_\A G \to \cn$ be two complex-valued functions such that $\phi$
depends only on $t \in \rn^m$ while $\tau^\alpha(\psi)$ is independent of $t$ for all $\alpha \geq 0$.
Then the tension field of their product $\phi\cdot\psi$ satisfies the identity
\begin{equation*}
\tau^r(\phi \cdot \psi) = \sum_{\alpha=0}^r \binom{r}{\alpha} \, \tau^\alpha(\phi) \, \tau^{r-\alpha}(\psi).
\end{equation*}
In particular, if $\phi$ and $\psi$ are proper $p$-harmonic and proper $r$-harmonic on $\rn^m \ltimes_\A G$, respectively,
then their product $\phi\cdot\psi$ is proper $(p+r-1)$-harmonic
on $\rn^m \ltimes_\A G$.
\end{proposition}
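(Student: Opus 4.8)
The plan is to reduce the general product rule (\ref{eq-product-rule}) to a genuine Leibniz rule and then to imitate the inductive proof of the binomial theorem. Write $\phi_\alpha := \tau^\alpha(\phi)$ and $\psi_\beta := \tau^\beta(\psi)$. From the common expression for $\tau$ displayed in the Notation above, the tension field of a function of $t$ alone has no $x$-contribution and is again a function of $t$ alone; hence by induction every $\phi_\alpha$ depends only on $t$, while every $\psi_\beta$ is independent of $t$ by hypothesis. Substituting this into the corresponding expression for $\kappa$, each of whose summands contains either a $t$-derivative of its first argument or an $x$-derivative of its second, yields $\kappa(\phi_\alpha,\psi_\beta)=0$ for all $\alpha,\beta$. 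Therefore (\ref{eq-product-rule}) collapses to the derivation identity
\begin{equation*}
\tau(\phi_\alpha \cdot \psi_\beta) = \phi_{\alpha+1}\,\psi_\beta + \phi_\alpha\,\psi_{\beta+1}.
\end{equation*}

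With this in hand the main formula follows by induction on $r$, the case $r=0$ being trivial. Assuming the identity for $r$, I would apply $\tau$ to both sides, use its linearity together with the derivation identity on each summand $\phi_\alpha\psi_{r-\alpha}$, and then collect terms and invoke Pascal's rule $\binom{r}{\alpha}+\binom{r}{\alpha-1}=\binom{r+1}{\alpha}$, exactly as in the Leibniz formula for iterated derivatives of a product. This reproduces the claimed expression with $r$ replaced by $r+1$.

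For the final assertion, let $\phi$ be proper $p$-harmonic, $\psi$ proper $r$-harmonic, and set $N=p+r-1$. In the expansion of $\tau^N(\phi\cdot\psi)$ every summand $\phi_\alpha\psi_{N-\alpha}$ vanishes: if $\alpha\geq p$ then $\phi_\alpha=0$, while if $\alpha\leq p-1$ then $N-\alpha\geq r$ and so $\psi_{N-\alpha}=0$. Thus $\phi\cdot\psi$ is $N$-harmonic. To prove properness I would expand $\tau^{N-1}(\phi\cdot\psi)$ instead; a summand $\phi_\alpha\psi_{N-1-\alpha}$ can survive only if simultaneously $\alpha\leq p-1$ and $N-1-\alpha\leq r-1$, which forces $\alpha=p-1$ and leaves the single term $\binom{N-1}{p-1}\,\tau^{p-1}(\phi)\,\tau^{r-1}(\psi)$.

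The only step requiring more than formal bookkeeping — and hence the main obstacle — is showing that this surviving term is not identically zero. Here I would use that $\rn^m\ltimes_\A G$ is, as a smooth manifold, the product of the $t$-space $\rn^m$ with $G$: the factor $\tau^{p-1}(\phi)$ is a not-identically-zero function of $t$ alone and $\tau^{r-1}(\psi)$ a not-identically-zero function of the remaining coordinates alone, so choosing $t_0$ and a point $q_0\in G$ at which the respective factors are nonzero makes the product nonzero at $(t_0,q_0)$. Consequently $\tau^{N-1}(\phi\cdot\psi)\not\equiv 0$ and $\phi\cdot\psi$ is proper $N$-harmonic.
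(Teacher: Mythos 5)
Your proof is correct and follows essentially the same route as the paper's: show that $\tau^\beta(\phi)$ stays a function of $t$ alone, deduce $\kappa(\tau^\beta(\phi),\tau^\alpha(\psi))=0$ from the formulas for the conformality operator, run the binomial induction via the product rule (\ref{eq-product-rule}), and identify the single surviving term $\binom{p+r-2}{p-1}\,\tau^{p-1}(\phi)\,\tau^{r-1}(\psi)$ at order $p+r-2$. Your final paragraph even makes explicit a point the paper leaves implicit, namely that this surviving term cannot vanish identically because $\rn^m\ltimes_\A G$ is globally a product manifold, so the two non-vanishing factors can be evaluated at independently chosen points.
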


\begin{remark}
This result is reminiscent of the variable separation statement  on product manifolds, which can be found in Lemma 6.1 of \cite{Gud-14}. For our result here we need the additional assumption that $\tau^\alpha(\psi)$ is independent of $t$ for all $\alpha \geq 0$.
This assumption is superfluous for direct products of manifolds, but it is essential in our cases.
\end{remark}

\begin{proof}
Since $\phi$ is a function of $t$, we see by the formulae (\ref{eq-rm*rn-lap}) and (\ref{eq-rm*heisenberg-lap}) for the 
Laplace-Beltrami operator, that $\tau^\beta(\phi)$ remains to be a function of $t$ for all $\beta \geq 0$.
Since by assumption, the tension field $\tau^\alpha(\psi)$ is independent of $t$ for all $\alpha\geq 0$, it follows from
the relations (\ref{eq-rm*rn-conf}) and (\ref{eq-rm*heisenberg-conf}) for the conformality operator, that
\begin{equation*}
\kappa(\tau^\beta(\phi), \tau^\alpha(\psi)) = 0,
\end{equation*}
for all $\alpha,\beta \geq 0$. The first part of our statement now follows easily by induction combined with the product rule (\ref{eq-product-rule}) for the Laplace-Beltrami operator.
For the final claim of the result notice that the identity we have just proven implies that
\begin{equation*}
\tau^{p+r-2}(\phi\cdot\psi) = \binom{p+r-2}{p-1}\, \tau^{p-1}(\phi) \, \tau^{r-1}(\psi) \not= 0
\quad\text{and}\quad
\tau^{p+r-1}(\phi\cdot\psi) = 0,
\end{equation*}
so that the product $\phi\cdot\psi$ is indeed proper $(p+r-1)$-harmonic.
\end{proof}

Constructing proper $r$-harmonic functions $\psi : \rn^m \ltimes_\A G \to \cn$ such that $\tau^\alpha(\psi)$ is independent of $t$ for $\alpha \geq 0$
seems difficult in general, if not impossible.
However, we note that any  harmonic function on $\rn^m \ltimes_\A G$ which is independent of $t$ trivially satisfies this condition.
On the other hand, we can use our main Theorem \ref{thm-p-harmonic-main} to construct proper $r$-harmonic functions depending only on $t$.
We summarize these thoughts in the following statement.

\begin{proposition}\label{prop-pharmonic-x-t} 
\leavevmode
\begin{enumerate}[label=(\roman*), font=\upshape, itemsep=0.3cm]
\item Define the complex-valued function $\psi : \rn^m \ltimes_\A G \to \cn$ by
\begin{equation*}
\begin{cases}
\psi(x) = a + \textstyle\sum_i v_i x_i + \textstyle\sum_{ij} B_{ij}x_ix_j & \text{if } G= \rn^n,\\[0.2cm]
\psi(\xi,x) = a + b\xi +  \textstyle\sum_i v_i x_i + \textstyle\sum_{ij} B_{ij}x_ix_j & \text{if } G= \mathrm{H}^{2n+1},
\end{cases}
\end{equation*}
where $a,b,v_i,B_{ij}\in \cn$ are not all zero and the coefficients $B_{ij}$ form a symmetric matrix such that
\begin{equation*}
\trace (\mu(t)\,\mu(t)^\transp B) = 0, \quad t \in \rn^m.
\end{equation*}
Then $\psi$ is proper harmonic on $\rn^m \ltimes_\A G$.

\item Let $(r_k)_{k=1}^m$ be a collection of positive integers
and define the complex-valued functions $\phi_{k} : \rn^m \ltimes_\A G\to \cn$ by
\begin{equation*}
\phi_{k}(t_k) = \begin{cases}
c_1 \, t_k^{r_k-1} e^{\omega_k t_k} + c_{2} \, t_k^{r_k-1} & \text{if }\;\omega_k \not= 0,\\[0.2cm]
c_1 \, t_k^{2r_k-1} + c_{2} \, t_k^{2r_k-2} & \text{if }\;\omega_k = 0,
\end{cases}
\end{equation*}
where $c = (c_1, c_2) \in \cn^2$ is non-zero. Then for each $k = 1,\ldots, m$, the function $\phi_{k}$ is proper $r_k$-harmonic on $\rn^m \ltimes_\A G$.
Furthermore, their product
\begin{equation*}
\prod_{k=1}^m \phi_k(t_k)
\end{equation*}
is proper $(r_1 + \ldots + r_m -m+1)$-harmonic on $\rn^m \ltimes_\A G$.
\end{enumerate}
\end{proposition}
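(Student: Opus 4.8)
The plan is to treat the two parts separately, in each case reducing everything to the explicit formulae for $\tau$ and $\kappa$ recorded in the Notation immediately preceding the statement. For part (i) I would simply compute $\tau(\psi)$ directly. Since $\psi$ is independent of $t$ and, in the Heisenberg case, affine in $\xi$, the sum $\sum_k(\partial_{t_k}^2 - \omega_k\partial_{t_k})\psi$ vanishes, and so do all the terms in (\ref{eq-rm*heisenberg-lap}) involving $\partial_\xi^2$ and $\partial_\xi\partial_{x_i}$ because $\psi$ is at most linear in $\xi$. What survives is $\sum_{ij}(\mu(t)\mu(t)^\transp)_{ij}\,\partial_{x_i}\partial_{x_j}\psi = 2\sum_{ij}(\mu(t)\mu(t)^\transp)_{ij}B_{ij}$, using $\partial_{x_i}\partial_{x_j}\psi = 2B_{ij}$. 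As $B$ is symmetric this equals $2\,\trace(\mu(t)\mu(t)^\transp B)$, which vanishes by hypothesis, so $\tau(\psi)=0$. Since the monomials $1,\xi,x_i,x_ix_j$ are linearly independent and the coefficients are not all zero, $\psi\not\equiv 0$; hence $\psi$ is proper harmonic.

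For part (ii) I would first observe that on any function depending only on the single variable $t_k$ the tension field reduces to the ordinary differential operator $L_k = \frac{d^2}{dt_k^2} - \omega_k\frac{d}{dt_k}$, and crucially this property is preserved under iteration, so $\tau^r(\phi_k) = L_k^r(\phi_k)$. Factoring $L_k = D(D-\omega_k)$ with $D = d/dt_k$, the kernel of $L_k^{\,r}$ is spanned by $\{t_k^a,\,t_k^a e^{\omega_k t_k} : 0\le a\le r-1\}$ when $\omega_k\neq 0$, and by $\{t_k^a : 0\le a\le 2r-1\}$ when $\omega_k=0$. In either case the stated $\phi_k$ lies in $\ker L_k^{\,r_k}$, but its top-order term ($t_k^{r_k-1}e^{\omega_k t_k}$ or $t_k^{r_k-1}$, respectively $t_k^{2r_k-1}$) is absent from $\ker L_k^{\,r_k-1}$, so $\phi_k\notin\ker L_k^{\,r_k-1}$; thus $\phi_k$ is proper $r_k$-harmonic. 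This is the instance of Theorem \ref{thm-p-harmonic-main} obtained by taking the isoparametric function to be the coordinate $t_k$, for which $\Phi\equiv-\omega_k$ and $\Psi\equiv 1$.

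For the product I would establish the multinomial identity
$$
\tau^r\Big(\prod_{k=1}^m\phi_k\Big)
= \sum_{\alpha_1+\cdots+\alpha_m=r}\binom{r}{\alpha_1,\ldots,\alpha_m}\prod_{k=1}^m\tau^{\alpha_k}(\phi_k)
$$
by induction on $m$, in the spirit of Proposition \ref{prop-separation-1}. The key point is that distinct factors depend on disjoint variables, so $\kappa(\tau^\beta(\phi_i),\tau^\gamma(\phi_j))=0$ for $i\neq j$ (the $t$-sum in the conformality operator has no common index and the $x$-terms vanish); feeding this into the product rule (\ref{eq-product-rule}) together with the inductive hypothesis yields the identity. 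With it in hand I would conclude by a degree count: a summand $\prod_k\tau^{\alpha_k}(\phi_k)$ can be nonzero only if $\alpha_k\le r_k-1$ for every $k$, so every summand vanishes once $r=r_1+\cdots+r_m-m+1$, whereas at $r=r_1+\cdots+r_m-m$ the constraints force $\alpha_k=r_k-1$ for all $k$, leaving the single surviving summand $\prod_k\tau^{r_k-1}(\phi_k)$. This is a product of functions of the independent variables $t_k$, each not identically zero by the previous step, hence itself not identically zero. This shows the product is proper $(r_1+\cdots+r_m-m+1)$-harmonic.

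I expect the main obstacle to be the product step. One must notice that Proposition \ref{prop-separation-1} cannot be quoted verbatim, since none of the factors has all its iterated tension fields independent of $t$; only its proof technique transfers, so the multinomial formula must be proved from scratch. The second delicate point is verifying that the unique surviving top-order term $\prod_k\tau^{r_k-1}(\phi_k)$ is genuinely non-vanishing, which is precisely where the explicit non-vanishing of each $\tau^{r_k-1}(\phi_k)$ established in the first step is needed.
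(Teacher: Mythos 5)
Your proposal is correct and follows essentially the same route as the paper: part (i) is the identical direct computation $\tau(\psi)=2\,\trace(\mu(t)\mu(t)^\transp B)=0$, and for part (ii) the paper likewise treats $t_k$ as isoparametric with $\Phi\equiv-\omega_k$, $\Psi\equiv 1$ and establishes the same multinomial identity ``analogously to Proposition \ref{prop-separation-1}''. Your only deviation is cosmetic: you verify the first claim of (ii) by an explicit kernel computation for $L_k^{r_k}=D^{r_k}(D-\omega_k)^{r_k}$ instead of citing Theorem \ref{thm-p-harmonic-main}, which is precisely the ``easy calculation'' the paper leaves to the reader, and you spell out the degree-count and non-vanishing argument that the paper leaves implicit.
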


\begin{proof}
To prove part (i) we note that since $B$ is symmetric we have
\begin{equation*}
\frac{\partial^2 \psi}{\partial x_i \partial x_j} = 2\,B_{ij}
\end{equation*}
so that
\begin{equation*}
\tau(\psi) = 2\textstyle\sum_{ij} (\mu(t) \, \mu(t)^\transp)_{ij} \,B_{ij} = 2\, \trace (\mu(t)\,\mu(t)^\transp B),
\end{equation*}
confirming the result for both choices of the Lie group $G$.

For part (ii) we note that
\begin{equation*}
\tau(t_k) = -\omega_k  \quad\text{and}\quad \kappa(t_k,t_k) = 1,
\end{equation*}
so that for each $k=1,\ldots,m$ the coordinate function $t_k$ is isoparametric.
The first claim then follows from Theorem \ref{thm-p-harmonic-main} after an easy calculation.
The second claim follows by noting that 
\begin{equation*}
\tau^r\lrpar{\prod_{k=1}^m \phi_k} = \sum_{j_1+\ldots+j_m = r} \binom{r}{j_1,\ldots, j_m} \, \prod_{k=1}^m \tau^{j_k}(\phi_k),
\end{equation*}
which can be proven analogously to Proposition \ref{prop-separation-1}.
\end{proof}

The matrix $B$ in part (i) of our Proposition \ref{prop-pharmonic-x-t} can of course be taken to be zero.
The following examples show though that in important cases this is not necessary.

\begin{example}\label{ex-g44}
Consider the four-dimensional Lie group 
\begin{equation*}
\mathrm{G}_{4.4} = \big\{ \begin{bmatrix}
e^{t} & te^{t} & \frac{1}{2}t^2e^{t} & x_1\\
0 & e^{t} & te^{t} & x_2\\
0 & 0 & e^{t} & x_3\\
0 & 0 & 0 & 1
\end{bmatrix} \,\mid\, (t,x) \in \rn \times \rn^3 \big\}.
\end{equation*}
This is the semidirect product
$\rn \ltimes_\A \rn^3$ with respect to the family $\A$ consisting of the single matrix
\begin{equation*}
\begin{bmatrix}
1 & 1 & 0\\
0 & 1 & 1\\
0 & 0 & 1
\end{bmatrix}.
\end{equation*}
Then following Proposition \ref{prop-separation-1}, combined with Proposition \ref{prop-pharmonic-x-t}, the function 
\begin{equation*}
(t,x) \mapsto (c_1 t^{r-1}e^{-3 t} + c_{2}t^{r-1}) (a_1 + a_2 x_1 + a_3 x_2 + a_4 (x_2^2 - x_3^3 - 2x_1x_3))
\end{equation*}
is proper $r$-harmonic on $\mathrm{G}_{4.4}$ for any non-zero elements $c \in \cn^2$ and $a \in \cn^4$.
\end{example}

\begin{example}\label{ex-g48}
For $\alpha \in [-1,1]$ consider the following four-dimensional Lie group 
\begin{equation*}
\mathrm{G}_{4.8}^\alpha
=
\big\{
\begin{bmatrix}
e^{(1+\alpha) t} & -\frac{x_2}{2}e^{t} & \frac{x_1}{2}e^{\alpha t} & \xi\\[0.1cm]
0 & e^{t} & 0 & x_1\\
0 & 0 & e^{\alpha t} & x_2\\
0 & 0 & 0 & 1
\end{bmatrix}
\,\mid\, (t,\xi,x) \in \rn \times \rn \times \rn^2 \big\}.
\end{equation*}
This is the semidirect product
$\rn \ltimes_\A \mathrm{H}^3$ with respect to the family $\A$ consisting of the single matrix
\begin{equation*}
\begin{bmatrix}
1 & 0\\
0 & \alpha
\end{bmatrix}.
\end{equation*}
Then Proposition \ref{prop-separation-1}, in conjunction with Proposition \ref{prop-pharmonic-x-t},  implies that the function defined by
\begin{equation*}
(t,\xi,x) \mapsto \begin{cases}
(c_1 t^{2r-1} + c_{2} t^{2r-2}) (a_1 + a_2\xi + a_3 x_1 + a_4 x_2 + a_5 x_1 x_2) & \text{if }\;\alpha = -1,\\[0.2cm]
(c_1 t^{r-1}e^{2(1+\alpha) t} + c_{2}t^{r-1}) (a_1 + a_2\xi + a_3 x_1 + a_4 x_2 + a_5 x_1 x_2) & \text{if }\;\alpha \not= -1
\end{cases} 
\end{equation*}
is proper $r$-harmonic on $\mathrm{G}_{4.8}^\alpha$ for any non-zero elements $c \in \cn^2$ and $a \in \cn^5$.
\end{example}

We now proceed by constructing a non-trivial class of complex isoparametric functions on $\rn^m \ltimes_\A G$.
These functions will depend only on the variables $t$ and $x$ i.e.\ they will be independent of the variable $\xi$ if $G = \mathrm{H}^{2n+1}$.
Here we will use the well-known fact that the elements of a commuting family of matrices always possess a common eigenvector.

\begin{proposition}\label{lemma-phi}
Let $v$ be a common eigenvector of the commuting family $\A^\transp = (A_k^\transp)_{k}$
and $\lambda = (\lambda_1, \ldots, \lambda_m)$ be the vector consisting of the corresponding eigenvalues.
Then the complex-valued function $\phi : \rn^m \ltimes_\A G \to \cn$ defined by
\begin{equation*}
\phi(t,x) = e^{-\langle \lambda, t \rangle} \langle v,x \rangle
\end{equation*}
is complex isoparametric on $\rn^m \ltimes_\A G$ with
\begin{equation*}
\tau(\phi) = \langle \lambda, \lambda + \omega \rangle \,  \phi \quad \text{and} \quad
\kappa(\phi,\phi) = \langle \lambda, \lambda \rangle \, \phi^2 + \langle v, v \rangle.
\end{equation*}
\end{proposition}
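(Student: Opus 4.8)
The plan is to verify the two stated identities by direct computation and then read off the certifying holomorphic functions $\Phi,\Psi$. The key preliminary observation is that $\phi(t,x)=e^{-\langle\lambda,t\rangle}\langle v,x\rangle$ is independent of $\xi$ in the Heisenberg case, so we may work throughout with the unified simplified expressions for the Laplace-Beltrami and conformality operators recorded in the Notation, treating both choices of $G$ at once. The single structural input needed is the eigenvector relation: since $A_k^\transp v=\lambda_k v$ for every $k$, the matrix $\sum_k A_k^\transp t_k$ sends $v$ to $\langle\lambda,t\rangle v$, whence
$$\mu(t)^\transp v=\Exp\lrpar{\sum_k A_k^\transp t_k}v=e^{\langle\lambda,t\rangle}v.$$
This identity is exactly what will collapse the $x$-contribution to the conformality operator into a constant.

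First I would differentiate. The $t$-dependence is a pure exponential, so $\partial\phi/\partial t_k=-\lambda_k\phi$ and $\partial^2\phi/\partial t_k^2=\lambda_k^2\phi$, while the $x$-dependence is linear, giving $\partial\phi/\partial x_i=e^{-\langle\lambda,t\rangle}v_i$ and $\partial^2\phi/\partial x_i\partial x_j=0$. Substituting into the tension field formula, the second-order $x$-terms drop out and the remaining $t$-sum yields
$$\tau(\phi)=\sum_k\lrpar{\lambda_k^2+\omega_k\lambda_k}\,\phi=\langle\lambda,\lambda+\omega\rangle\,\phi,$$
which is the claimed expression for $\tau(\phi)$.

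For the conformality operator I would split the sum into its $t$-part and its $x$-part. The $t$-part is immediate, $\sum_k(\partial\phi/\partial t_k)^2=\langle\lambda,\lambda\rangle\,\phi^2$. The $x$-part is
$$\sum_{ij}(\mu(t)\,\mu(t)^\transp)_{ij}\,e^{-2\langle\lambda,t\rangle}v_iv_j=e^{-2\langle\lambda,t\rangle}\,\langle\mu(t)^\transp v,\mu(t)^\transp v\rangle,$$
and here the eigenvector relation enters: replacing $\mu(t)^\transp v$ by $e^{\langle\lambda,t\rangle}v$ cancels the exponential prefactor exactly and leaves $\langle v,v\rangle$. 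Adding the two parts gives $\kappa(\phi,\phi)=\langle\lambda,\lambda\rangle\,\phi^2+\langle v,v\rangle$, as required.

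Finally, to conclude that $\phi$ is complex isoparametric in the sense of Definition \ref{def-isoparametric}, I would exhibit the holomorphic functions certifying the two identities: taking $\Phi(z)=\langle\lambda,\lambda+\omega\rangle\,z$ and $\Psi(z)=\langle\lambda,\lambda\rangle\,z^2+\langle v,v\rangle$, both entire and hence defined on all of $\cn\supseteq\phi(M)$, one has $\tau(\phi)=\Phi\circ\phi$ and $\kappa(\phi,\phi)=\Psi\circ\phi$. Beyond routine differentiation, the only genuine step is recognizing that the eigenvector property of $v$ under $\A^\transp$ is precisely what turns the $x$-part of $\kappa(\phi,\phi)$ into the $t$-independent constant $\langle v,v\rangle$; this is also the feature that, when $\langle v,v\rangle\neq 0$, prevents $\phi$ from being a mere eigenfunction.
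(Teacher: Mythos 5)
Your proposal is correct and follows essentially the same route as the paper's own proof: compute the partial derivatives of $\phi$, substitute into the unified formulas for $\tau$ and $\kappa$, and use the eigenvector relation $\mu(t)^\transp v = e^{\langle\lambda,t\rangle}v$ to collapse the $x$-part of $\kappa(\phi,\phi)$ into the constant $\langle v,v\rangle$. Your closing step of explicitly exhibiting the certifying functions $\Phi(z)=\langle\lambda,\lambda+\omega\rangle\,z$ and $\Psi(z)=\langle\lambda,\lambda\rangle\,z^2+\langle v,v\rangle$ is left implicit in the paper but is a welcome clarification, not a deviation.
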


\begin{remark}
Here, $\langle \cdot, \cdot \rangle$ denotes the complex bilinear product of vectors given by
\begin{equation*}
\langle v,w \rangle = \sum_{i} v_i w_i, \quad v,w\in \cn^n.
\end{equation*}
\end{remark}

\begin{proof}
We have
\begin{equation*}
\frac{\partial \phi}{\partial t_k} = -\lambda_k \, \phi, \quad \frac{\partial^2 \phi}{\partial t_k^2} = \lambda_k^2 \, \phi, \quad
\frac{\partial \phi}{\partial x_i} = e^{-\langle\lambda, t\rangle}v_i\ \ \text{and}\ \ \frac{\partial^2 \phi}{\partial x_i \partial x_j} = 0.
\end{equation*}
Thus,
\begin{equation*}
\tau(\phi) = \sum_k ( \lambda_k^2 + \omega_k \lambda_k ) \, \phi = \langle\lambda, \lambda + \omega \rangle \, \phi,
\end{equation*}
as well as
\begin{align*}
\kappa(\phi,\phi)
&= \sum_k \lambda_k^2 \,\phi^2 + e^{-2\langle \lambda, t\rangle} \sum_{ij}(\mu(t) \, \mu(t)^\transp)_{ij} v_iv_j\\[0.1cm]
&= \langle \lambda, \lambda\rangle\,\phi^2 + e^{-2\langle\lambda, t\rangle} \langle\mu(t)^\transp v, \mu(t)^\transp v\rangle\\[0.1cm]
&= \langle \lambda, \lambda\rangle\,\phi^2 + \langle v,v\rangle,
\end{align*}
where the final equality follows from the fact that $v$ is an eigenvector of
\begin{equation*}
\mu(t)^\transp = \Exp\lrpar{\textstyle\sum_k A_k^\transp t_k}
\end{equation*}
with eigenvalue $e^{\langle\lambda,t\rangle}$ by assumption.
\end{proof}

Having constructed complex isoparametric functions, we can now apply Theorem \ref{thm-p-harmonic-main} to generate examples of $r$-harmonic functions.
As one might suspect, the antiderivatives from Theorem \ref{thm-p-harmonic-main} can not be computed explicitly in general.
In what follows, we consider several specific cases where this is possible.

\begin{example}\label{ex:g_41}
The four-dimensional Lie group
\begin{equation*}
\mathrm{G}_{4.1} = \big\{ \begin{bmatrix}
1 & t & \frac{1}{2}t^2 & x_1\\
0 & 1 & t & x_2\\
0 & 0 & 1 & x_3\\
0 & 0 & 0 & 1
\end{bmatrix} \,\mid\, (t,x) \in \rn \times \rn^3 \big\}
\end{equation*}
is the semidirect product $\rn \ltimes_\A \rn^3$, where $\A$ consists of the single matrix
\begin{equation*}
A = \begin{bmatrix}
0 & 1 & 0\\
0 & 0 & 1\\
0 & 0 & 0
\end{bmatrix}.
\end{equation*}
All the eigenvalues of $A^\transp$ are $0$ and the eigenspace is the one-dimensional space spanned by the unit vector $e_3$.
Thus, the function $\phi$ from Proposition \ref{lemma-phi} is simply the coordinate function
$(t,x) \mapsto x_3$ satisfying 
\begin{equation*}
\tau(\phi) = 0 \quad\text{and}\quad \kappa(\phi,\phi) = 1.
\end{equation*}
A simple application of Theorem \ref{thm-p-harmonic-main} then shows that the function
\begin{equation*}
(t,x) \mapsto a_1 x_3^{2r-1} + a_2 x_3^{2r-2}
\end{equation*}
is proper $r$-harmonic on $\mathrm{G}_{4.1}$.
By construction this function also satisfies the condition from Proposition \ref{prop-separation-1} in that its Laplacian of any order is independent of $t$.
Hence, if $p,r,q$ are positive integers such that $r+q-1=p$,
Proposition \ref{prop-separation-1} combined with part (ii) Proposition \ref{prop-pharmonic-x-t} implies that the function
 defined by
\begin{equation*}
(t,x) \mapsto (c_1 t^{2r-1} + c_2 t^{2r-2})(a_1 x_3^{2q-1} + a_2 x_3^{2q-2})
\end{equation*}
is proper $p$-harmonic on $\mathrm{G}_{4.1}$ for any non-zero $a,c \in \cn^2$.
\end{example}

\begin{example}\label{ex-sol3}
The celebrated Thurston geometry $\mathbf{Sol}^3$ is the solvable Lie group
\begin{equation*}
\mathbf{Sol}^3 = \big\{ \begin{bmatrix}
e^t & 0 & x_1\\
0 & e^{-t} & x_2\\
0 & 0 & 1
\end{bmatrix} \,\mid\, (t,x) \in \rn \times \rn^2 \big\}.
\end{equation*}
Examples of proper $r$-harmonic functions on this Lie group have already been constructed in \cite{Gud-14} and \cite{Gud-Sif-2}.
We can view this Lie group as the semidirect product $\rn \ltimes_\A \rn^2$, where $\A$ consists of the matrix
\begin{equation*}
A = \begin{bmatrix}
1 & 0\\
0 & -1
\end{bmatrix}.
\end{equation*}
The eigenvalues of $A^\transp$ are $1$ and $-1$, and the corresponding eigenvectors are $e_1$ and $e_2$, respectively.
In view of Proposition \ref{lemma-phi} we get an isoparametric function for each eigenvector, namely
\begin{equation*}
\phi_1(t,x) = e^{-t}x_1 \quad\text{and}\quad \phi_2(t,x) = e^t x_2,
\end{equation*}
both of which satisfy
\begin{equation*}
\tau(\phi_i) = \phi_i \quad\text{and}\quad \kappa(\phi_i,\phi_i) = \phi_i^2+1. 
\end{equation*}
Applying Theorem \ref{thm-p-harmonic-main} to these isoparametric functions, we find that the functions
\begin{align*}
(t,x) &\mapsto c_1 \arsinh(e^{-t} x_1)^{2r-1} + c_2 \arsinh(e^{-t} x_1)^{2r-2},\\[0.1cm]
(t,x) &\mapsto c_1 \arsinh(e^{t} x_2)^{2r-1} + c_2 \arsinh(e^{t} x_2)^{2r-2}
\end{align*}
are proper $r$-harmonic on $\mathbf{Sol}^3$
for any non-zero $c \in \cn^2$.
\end{example}

The reader should note that in both Examples \ref{ex:g_41} and \ref{ex-sol3} the eigenvector $v$ is non-isotropic i.e.\ $\langle v, v \rangle \not= 0$.
This is of course expected since the eigenvector is real in both cases.
However, if the eigenvector $v$ turns out to be complex, then it could happen that it is isotropic.
In this case we see directly that the function $\phi$ from Proposition \ref{lemma-phi} becomes an eigenfunction cf.\ (\ref{eq-eigenfunction-def}).
In fact, a quick inspection of the proof of Proposition \ref{lemma-phi} shows that in this case the vector $\lambda$ can be taken arbitrarily,
rather than requiring it to consist of eigenvalues.

\begin{corollary}
\label{prop-phi-pharmonic-v-isotropic-1}
Let $v$ be a common eigenvector of the commuting family $\A^\transp = (A_k^\transp)_{k}$
and suppose that $v$ is isotropic.
Then for any complex vector $\nu \in \cn^m$ the function $\phi: \rn^m \ltimes_\A G \to \cn$ defined by
\begin{equation*}
\phi(t,x) = e^{-\langle \nu, t \rangle } \langle v, x \rangle
\end{equation*}
is an eigenfunction on $\rn^m \ltimes_\A G$ satisfying
\begin{equation*}
\tau(\phi) = \langle \nu, \nu + \omega \rangle \,  \phi \quad \text{and} \quad
\kappa(\phi,\phi) = \langle \nu, \nu \rangle \, \phi^2.
\end{equation*}
\end{corollary}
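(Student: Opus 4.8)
The plan is to follow the proof of Proposition~\ref{lemma-phi} almost verbatim; the whole point is that the isotropy hypothesis $\langle v,v\rangle=0$ is exactly what decouples the exponential weight from the eigenvalues and thereby allows $\nu$ to be chosen arbitrarily. Let $\lambda=(\lambda_1,\ldots,\lambda_m)$ be the eigenvalues attached to the common eigenvector $v$, so that $A_k^\transp v=\lambda_k v$ for each $k$ and consequently $\mu(t)^\transp v=\Exp\lrpar{\sum_k A_k^\transp t_k}\,v=e^{\langle\lambda,t\rangle}v$.

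First I would record the partial derivatives of $\phi(t,x)=e^{-\langle\nu,t\rangle}\langle v,x\rangle$. Exactly as in Proposition~\ref{lemma-phi}, but with the arbitrary vector $\nu$ in place of $\lambda$, one finds $\partial\phi/\partial t_k=-\nu_k\,\phi$, $\partial^2\phi/\partial t_k^2=\nu_k^2\,\phi$, $\partial\phi/\partial x_i=e^{-\langle\nu,t\rangle}v_i$ and $\partial^2\phi/\partial x_i\partial x_j=0$. The key observation is that these identities follow solely from the explicit form of $\phi$ and in no way use that $\nu$ consists of eigenvalues. Substituting into the simplified expression for the Laplace--Beltrami operator then yields $\tau(\phi)=\sum_k(\nu_k^2+\omega_k\nu_k)\,\phi=\langle\nu,\nu+\omega\rangle\,\phi$ at once, which settles the first claimed identity.

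The decisive step is the conformality operator, and this is where isotropy enters. Substituting the derivatives above gives
\begin{align*}
\kappa(\phi,\phi)
&=\sum_k\nu_k^2\,\phi^2+e^{-2\langle\nu,t\rangle}\sum_{ij}(\mu(t)\,\mu(t)^\transp)_{ij}v_iv_j\\
&=\langle\nu,\nu\rangle\,\phi^2+e^{-2\langle\nu,t\rangle}\langle\mu(t)^\transp v,\mu(t)^\transp v\rangle.
\end{align*}
Using $\mu(t)^\transp v=e^{\langle\lambda,t\rangle}v$, the second summand equals $e^{-2\langle\nu,t\rangle}e^{2\langle\lambda,t\rangle}\langle v,v\rangle$, which vanishes precisely because $v$ is isotropic. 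Hence $\kappa(\phi,\phi)=\langle\nu,\nu\rangle\,\phi^2$, and comparing with the defining relations~(\ref{eq-eigenfunction-def}) shows that $\phi$ is an eigenfunction with constants $\langle\nu,\nu+\omega\rangle$ and $\langle\nu,\nu\rangle$.

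I expect no genuine obstacle here; the single point requiring care is articulating \emph{why} $\nu$ may be chosen freely. In the conformality computation the eigenvalue vector $\lambda$ enters only through the term carrying the factor $\langle v,v\rangle$, so once isotropy annihilates that term every trace of $\lambda$ disappears and $\nu$ survives only through the $t$-derivatives, where it is unconstrained. This is exactly the decoupling already anticipated in the remark preceding the statement.
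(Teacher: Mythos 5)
Your proof is correct and is essentially the paper's own argument: the paper proves this corollary by the remark preceding it, namely that inspecting the proof of Proposition~\ref{lemma-phi} shows the eigenvalue vector enters only through the term $e^{-2\langle\nu,t\rangle}\langle\mu(t)^\transp v,\mu(t)^\transp v\rangle = e^{-2\langle\nu,t\rangle}e^{2\langle\lambda,t\rangle}\langle v,v\rangle$, which isotropy kills, so $\nu$ may be arbitrary. You have simply written out that inspection explicitly, with the same computations and the same decoupling observation.
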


For this situation the antiderivatives from Theorem \ref{thm-p-harmonic-main} have already been computed in \cite{Gud-Sob-1}, cf.\ also (\ref{eq-p-harmonic-eigenfunction}).

\begin{example}
Consider the following interesting four-dimensional Lie group
\begin{equation*}
\mathrm{G}_{4.10}
=
\big\{ \begin{bmatrix}
e^{-t_1}\cos(t_2) & e^{-t_1}\sin(t_2) & x_1 & 0\\
-e^{-t_1}\sin(t_2) & e^{-t_1}\cos(t_2) & x_2 & 0\\
0 & 0 & 1 & 0\\
0 & 0 & 0 & e^{t_2}\\
\end{bmatrix} \,\mid\, (t,x) \in \rn^2 \times \rn^2 \big\}.
\end{equation*}
This is the semidirect product $\rn^2 \ltimes_\A \rn^2$, where the family $\A$ consists of the two commuting matrices
\begin{equation*}
\begin{bmatrix}
-1 & 0\\
0 & -1
\end{bmatrix}, \quad
\begin{bmatrix}
0 & -1\\
1 & 0
\end{bmatrix}.
\end{equation*}
Their two common eigenvectors are $v_\pm = (1, \pm \mathrm i)$, both of which are isotropic.
Hence, we see from Corollary \ref{prop-phi-pharmonic-v-isotropic-1} that for any $\nu = (\nu_1, \nu_2) \in \cn^2$, the functions
\begin{equation*}
\phi_\pm(t,x) = e^{-(\nu_1 t_1 + \nu_2 t_2)} (x_1 \pm \mathrm i x_2)
\end{equation*} 
are eigenfunctions on $\mathrm{G}_{4.10}$ with
\begin{equation*}
 \tau(\phi_\pm) = (\nu_1^2 - 2\nu_1 + \nu_2^2) \, \phi_\pm  \quad\text{and}\quad   \kappa(\phi_\pm, \phi_\pm) = (\nu_1^2 + \nu_2^2) \, \phi_\pm^2. \qedhere
\end{equation*}
\end{example}

\smallskip

In the case when $v$ is isotropic, we can take the real and the imaginary part of the function $\phi$ from Proposition \ref{lemma-phi}
to obtain even more isoparametric functions.

\begin{proposition}\label{lemma-phi-ReIm}
Let $v$ be a common eigenvector of the commuting family $\A^\transp = (A_k^\transp)_{k}$ 
and let $\lambda = (\lambda_1, \ldots, \lambda_m)$ be the vector consisting of the corresponding eigenvalues.
Further suppose that the eigenvector $v$ is isotropic and define the function $\phi: \rn^m \ltimes_\A G \to \cn$ by
\begin{equation*}
\phi(t,x) = e^{-\langle \Re\lambda,t \rangle} \langle v, x \rangle.
\end{equation*}
Then the real part $\phi_1 = \Re\phi$ and the imaginary part $\phi_2 = \Im\phi$ of $\phi$ are isoparametric functions on $\rn^m \ltimes_\A G$ with
\begin{align*}
\tau(\phi_i) &= \langle \Re\lambda,\Re\lambda + \omega\rangle \,\phi_i,\\[0.1cm]
\kappa(\phi_i, \phi_i) &= \langle \Re\lambda, \Re\lambda \rangle \, \phi_i^2 +  \frac{1}{2} \langle v,\overline{v} \rangle.
\end{align*}
\end{proposition}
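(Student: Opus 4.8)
The plan is to reduce most of the statement to the eigenfunction case already treated in Corollary \ref{prop-phi-pharmonic-v-isotropic-1}, supplemented by one genuinely new computation involving the complex conjugate $\bar\phi$. Write $a = \Re\lambda \in \rn^m$. Since $a$ is real, the function $\phi(t,x) = e^{-\langle a, t\rangle}\langle v, x\rangle$ is exactly the eigenfunction produced by Corollary \ref{prop-phi-pharmonic-v-isotropic-1} with the choice $\nu = \Re\lambda$, so that
\[
\tau(\phi) = \langle\Re\lambda, \Re\lambda + \omega\rangle\,\phi \quad\text{and}\quad \kappa(\phi,\phi) = \langle\Re\lambda, \Re\lambda\rangle\,\phi^2.
\]
Because the coefficients appearing in the formulae for $\tau$ and $\kappa$ (namely $\omega_k$ and $(\mu(t)\mu(t)^\transp)_{ij}$) are all real, the operator $\tau$ commutes with taking real and imaginary parts, while $\kappa$ is the symmetric $\cn$-bilinear extension of a real operator. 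As the eigenvalue $\langle\Re\lambda, \Re\lambda+\omega\rangle$ is real, splitting $\tau(\phi) = \tau(\phi_1) + \mathrm i\,\tau(\phi_2)$ immediately yields $\tau(\phi_i) = \langle\Re\lambda, \Re\lambda+\omega\rangle\,\phi_i$ for $i = 1, 2$.

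The key new step is to compute $\kappa(\phi, \bar\phi)$. Using $\partial\bar\phi/\partial t_k = -a_k\bar\phi$ and $\partial\bar\phi/\partial x_i = e^{-\langle a, t\rangle}\bar v_i$ in the conformality formula, one obtains
\[
\kappa(\phi,\bar\phi) = \langle a, a\rangle\,\phi\bar\phi + e^{-2\langle a, t\rangle}\langle\mu(t)^\transp v, \mu(t)^\transp\bar v\rangle.
\]
Here I would exploit that $\bar v$ is itself a common eigenvector of $\A^\transp$ with the conjugate eigenvalues $\bar\lambda$, since the $A_k^\transp$ are real, so that $\mu(t)^\transp v = e^{\langle\lambda, t\rangle}v$ and $\mu(t)^\transp\bar v = e^{\langle\bar\lambda, t\rangle}\bar v$. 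As $\lambda + \bar\lambda = 2\Re\lambda = 2a$, the two exponentials combine to cancel the prefactor $e^{-2\langle a, t\rangle}$, leaving $\kappa(\phi,\bar\phi) = \langle\Re\lambda, \Re\lambda\rangle\,\phi\bar\phi + \langle v, \bar v\rangle$. It is worth contrasting this with $\kappa(\phi,\phi)$: there the isotropy $\langle v, v\rangle = 0$ annihilates the $x$-contribution entirely, whereas here the analogous term survives as the constant $\langle v, \bar v\rangle$.

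Finally I would recover the two conformality identities by polarization. Writing $\phi = \phi_1 + \mathrm i\phi_2$ with $\phi_1,\phi_2$ real and using the symmetry and $\cn$-bilinearity of $\kappa$, one has $\kappa(\phi,\phi) = \kappa(\phi_1,\phi_1) - \kappa(\phi_2,\phi_2) + 2\mathrm i\,\kappa(\phi_1,\phi_2)$ and $\kappa(\phi,\bar\phi) = \kappa(\phi_1,\phi_1) + \kappa(\phi_2,\phi_2)$, where each $\kappa(\phi_i,\phi_j)$ is real. Adding and subtracting $\Re\kappa(\phi,\phi)$ to $\kappa(\phi,\bar\phi)$ gives $\kappa(\phi_i,\phi_i) = \tfrac{1}{2}\lrpar{\kappa(\phi,\bar\phi) \pm \Re\kappa(\phi,\phi)}$, and substituting the expressions found above simplifies, via the identity $\phi\bar\phi \pm \Re(\phi^2) = 2\phi_i^2$, to $\kappa(\phi_i,\phi_i) = \langle\Re\lambda, \Re\lambda\rangle\,\phi_i^2 + \tfrac{1}{2}\langle v, \bar v\rangle$. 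This establishes both $\phi_1$ and $\phi_2$ as isoparametric: taking $\Phi(z) = \langle\Re\lambda, \Re\lambda+\omega\rangle z$ and $\Psi(z) = \langle\Re\lambda, \Re\lambda\rangle z^2 + \tfrac{1}{2}\langle v,\bar v\rangle$, which are polynomial and hence smooth, the defining relations $\tau(\phi_i) = \Phi\circ\phi_i$ and $\kappa(\phi_i,\phi_i) = \Psi\circ\phi_i$ hold. The only genuine subtlety, and the step I would handle most carefully, is the eigenvalue bookkeeping for $\bar v$ in the computation of $\kappa(\phi,\bar\phi)$; the remainder is routine differentiation and polarization.
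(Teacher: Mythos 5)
Your proof is correct and takes essentially the same route as the paper, which likewise settles the statement by direct calculation resting on Proposition \ref{lemma-phi} (of which Corollary \ref{prop-phi-pharmonic-v-isotropic-1} is the relevant specialization), exploiting the eigenvector property of $v$ and $\bar v$ and the isotropy of $v$. Your recovery of $\kappa(\phi_i,\phi_i)$ by polarization from $\kappa(\phi,\phi)$ and $\kappa(\phi,\bar\phi)$ is the same algebra as expanding $\Re v=\tfrac12(v+\bar v)$ and $\Im v=\tfrac{1}{2\mathrm i}(v-\bar v)$ in a direct computation, merely organized differently.
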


\begin{proof}
The result follows by direct calculations and Proposition \ref{lemma-phi}.
\end{proof}

\begin{example}
For $\alpha \geq 0$, consider the four-dimensional Lie group $\mathrm{G}_{4.9}^\alpha$ given by 
\begin{equation*}
\begin{bmatrix}
e^{2\alpha t} & -\frac{x_2\cos(t)+x_1\sin(t)}{2}e^{\alpha t} & \frac{x_1\cos(t)-x_2\sin(t)}{2}e^{\alpha t} & \xi & 0\\[0.135cm]
0 & e^{\alpha t}\cos(t) & e^{\alpha t}\sin(t) & x_1 & 0\\
0 & -e^{\alpha t}\sin(t) & e^{\alpha t}\cos(t) & x_2 & 0\\
0 & 0 & 0 & 1 & 0\\
0 & 0 & 0 & 0 & e^t
\end{bmatrix}
\end{equation*}
where $(t,\xi,x) \in \rn \times \rn \times \rn^2$.
We can view this group as the semidirect product $\rn \ltimes_\A \mathrm{H}^3$ with respect to the family $\A$ consisting of the single matrix
\begin{equation*}
A = \begin{bmatrix}
\alpha & 1\\
-1 & \alpha
\end{bmatrix}.
\end{equation*}
The two eigenvectors of $A^\transp$ are $v_\pm = (1,\pm \mathrm i)$ and the corresponding eigenvalues are ${\lambda_\pm = \alpha \pm \mathrm i}$.
Note that the eigenvectors $v_\pm$ are isotropic.
In what follows, it suffices to only consider the eigenvector $v = (1,\mathrm i)$ and its eigenvalue $\lambda = \alpha + \mathrm i$.
The functions $\phi_1$ and $\phi_2$ from Proposition \ref{lemma-phi-ReIm} are then given by
\begin{equation*}
\phi_i(t,\xi,x) = e^{-\alpha t} x_i,
\end{equation*}
satisfying
\begin{equation*}
\tau(\phi_i) = 3\alpha^2\phi_i \quad\text{and}\quad \kappa(\phi_i,\phi_i) = \alpha^2\phi_i^2 + 1.
\end{equation*}
We now apply Theorem \ref{thm-p-harmonic-main} to these isoparametric functions.

\smallskip

In the case when $\alpha = 0$ an easy calculation of the antiderivatives from Theorem \ref{thm-p-harmonic-main} shows that 
for any positive integer $r$ and any non-zero $c\in \cn^2$, the functions
\begin{equation*}
(t,\xi,x) \mapsto c_1 x_i^{2r-1} + c_2 x_i^{2r-2}, \quad i = 1, 2
\end{equation*}
are proper $r$-harmonic on $\mathrm{G}_{4.9}^0$.
By construction, these functions also satisfy the condition from Proposition \ref{prop-separation-1} 
so we may also multiply them by a function depending only on $t$ to obtain even more examples of proper $r$-harmonic functions
cf.\ Example \ref{ex:g_41}.

In the case when $\alpha \not= 0$ the functions $f_p$ from Theorem \ref{thm-p-harmonic-main} do not seem to possess a nice closed formula.
However, let us at least note that for any non-zero $c \in \cn^2$ the functions given by
\begin{equation*}
(t,\xi,x) \mapsto c_1 \, \frac{2(\alpha e^{-\alpha t} x_i)^3+3\alpha e^{-\alpha t} x_i}{\lrcurl{(\alpha e^{-\alpha t} x_i)^2+1}^{3/2}} + c_2, \quad i = 1, 2
\end{equation*}
are proper harmonic on $\mathrm{G}_{4.9}^\alpha$
and the functions given by
\begin{align*}
&(t,\xi,x) \mapsto c_1 \lrpar{\arsinh(\alpha e^{-\alpha t} x_i) + \frac{(\alpha e^{-\alpha t} x_i)^3}{3\lrcurl{(\alpha e^{-\alpha t} x_i)^2+1}^{3/2}}}\\[0.1cm]
&\quad +c_2 \lrpar{ \frac{ 2(\alpha e^{-\alpha t} x_i)^3 + 3\alpha e^{-\alpha t} x_i }{\lrcurl{(\alpha e^{-\alpha t} x_i)^2+1}^{3/2}} \, \arsinh(\alpha e^{-\alpha t} x_i)
- \frac{1}{(\alpha e^{-\alpha t} x_i)^2+1}}, \quad i = 1, 2
\end{align*}
are proper biharmonic on $\mathrm{G}_{4.9}^\alpha$.
\end{example}

\vspace*{\fill}

\appendix

\section{Tables}

\begin{table}[H]
\centering
\small

\renewcommand{\arraystretch}{1.5}

\makebox[\textwidth][c]{
\begin{tabular}{lllll}
\toprule
Lie algebra & Parameters & \multicolumn{3}{l}{Lie brackets, basis $\{E_1,E_2,E_3,E_4\}$} \\ \midrule
\multirow{2}{*}{$\g_{4.1}$} & &
$\lb{E_4}{E_1} = 0$, & $\lb{E_4}{E_2} = E_1$, & $\lb{E_4}{E_3} = E_2$, \\[-0.1cm]
& &
$\lb{E_3}{E_1} = 0$, & $\lb{E_3}{E_2} = 0$, & $\lb{E_2}{E_1} = 0$ \\[0.25cm]
\multirow{2}{*}{$\g_{4.2}^\alpha$} & \multirow{2}{*}{$\alpha \not= 0$} &
$\lb{E_4}{E_1} = \alpha E_1$, & $\lb{E_4}{E_2} = E_2$, & $\lb{E_4}{E_3} = E_2+E_3$, \\[-0.1cm]
& &
$\lb{E_3}{E_1} = 0$, & $\lb{E_3}{E_2} = 0$, & $\lb{E_2}{E_1} = 0$ \\[0.25cm]
\multirow{2}{*}{$\g_{4.3}$} & &
$\lb{E_4}{E_1} = E_1$, & $\lb{E_4}{E_2} = 0$, & $\lb{E_4}{E_3} = E_2$, \\[-0.1cm]
& &
$\lb{E_3}{E_1} = 0$, & $\lb{E_3}{E_2} = 0$, & $\lb{E_2}{E_1} = 0$ \\[0.25cm]
\multirow{2}{*}{$\g_{4.4}$} & &
$\lb{E_4}{E_1} = E_1$, & $\lb{E_4}{E_2} = E_1+E_2$, & $\lb{E_4}{E_3} = E_2+E_3$, \\[-0.1cm]
& &
$\lb{E_3}{E_1} = 0$, & $\lb{E_3}{E_2} = 0$, & $\lb{E_2}{E_1} = 0$ \\[0.25cm]
\multirow{2}{*}{$\g_{4.5}^{\alpha\beta\gamma}$} & \multirow{2}{*}{$\alpha\beta\gamma \not= 0$} &
$\lb{E_4}{E_1} = \alpha E_1$, & $\lb{E_4}{E_2} = \beta E_2$, & $\lb{E_4}{E_3} = \gamma E_3$, \\[-0.1cm]
& &
$\lb{E_3}{E_1} = 0$, & $\lb{E_3}{E_2} = 0$, & $\lb{E_2}{E_1} = 0$ \\[0.25cm]
\multirow{2}{*}{$\g_{4.6}^{\alpha\beta}$} & $\alpha > 0$, &
$\lb{E_4}{E_1} = \alpha E_1$, & $\lb{E_4}{E_2} = \beta E_2 - E_3$, & $\lb{E_4}{E_3} = E_2 + \beta E_3$, \\[-0.1cm]
& $\beta \in \rn$ &
$\lb{E_3}{E_1} = 0$, & $\lb{E_3}{E_2} = 0$, & $\lb{E_2}{E_1} = 0$ \\[0.25cm]
\multirow{2}{*}{$\g_{4.7}$} & &
$\lb{E_4}{E_1} = 2E_1$, & $\lb{E_4}{E_2} = E_2$, & $\lb{E_4}{E_3} = E_2+E_3$, \\[-0.1cm]
& &
$\lb{E_3}{E_1} = 0$, & $\lb{E_3}{E_2} = -E_1$, & $\lb{E_2}{E_1} = 0$ \\[0.25cm]
\multirow{2}{*}{$\g_{4.8}^\alpha$} & \multirow{2}{*}{$\alpha \in [-1,1]$} &
$\lb{E_4}{E_1} = (1+\alpha) E_1$, & $\lb{E_4}{E_2} = E_2$, & $\lb{E_4}{E_3} = \alpha E_3$, \\[-0.1cm]
& &
$\lb{E_3}{E_1} = 0$, & $\lb{E_3}{E_2} = -E_1$, & $\lb{E_2}{E_1} = 0$ \\[0.25cm]
\multirow{2}{*}{$\g_{4.9}^\alpha$} & \multirow{2}{*}{$\alpha \geq 0$} &
$\lb{E_4}{E_1} = 2\alpha E_1$, & $\lb{E_4}{E_2} = \alpha E_2 - E_3$, & $\lb{E_4}{E_3} = E_2 + \alpha E_3$, \\[-0.1cm]
& &
$\lb{E_3}{E_1} = 0$, & $\lb{E_3}{E_2} = -E_1$, & $\lb{E_2}{E_1} = 0$ \\[0.25cm]
\multirow{2}{*}{$\g_{4.10}$} & &
$\lb{E_4}{E_1} = -E_2$, & $\lb{E_4}{E_2} = E_1$, & $\lb{E_4}{E_3} = 0$, \\[-0.1cm]
& &
$\lb{E_3}{E_1} = -E_1$, & $\lb{E_3}{E_2} = -E_2$, & $\lb{E_2}{E_1} = 0$ \\[0.1cm]
\bottomrule
\end{tabular}
}
\vskip0.3cm

\caption{\cite{Pop-Boy-etal,Big-Rem} The (indecomposable) four-dimensional Lie algebras}
\label{table:4dimalgebras}
\end{table}

\vspace*{\fill}

\newpage

\vspace*{\fill}

\begin{table}[H]
\centering
\footnotesize

\makebox[\textwidth][c]{
\begin{tabular}{llcc}
\toprule
Lie group & Parameters & Family $\A$ & Linear representation \\\midrule
$\mathrm{G}_{4.1} = \rn \ltimes_\A \rn^3$ & &
$
\begin{bmatrix}
0 & 1 & 0\\
0 & 0 & 1\\
0 & 0 & 0
\end{bmatrix}
$ &
$\begin{bmatrix}
1 & t & \frac{1}{2}t^2 & x_1\\
0 & 1 & t & x_2\\
0 & 0 & 1 & x_3\\
0 & 0 & 0 & 1
\end{bmatrix}
$ \medskip \\
$\mathrm{G}_{4.2}^\alpha = \rn \ltimes_\A \rn^3$ & $\alpha \not= 0$ &
$
\begin{bmatrix}
\alpha & 0 & 0\\
0 & 1 & 1\\
0 & 0 & 1
\end{bmatrix}
$ &
$\begin{bmatrix}
e^{\alpha t} & 0 & 0 & x_1\\
0 & e^{t} & te^{t} & x_2\\
0 & 0 & e^{t} & x_3\\
0 & 0 & 0 & 1
\end{bmatrix}
$ \medskip \\
$\mathrm{G}_{4.3} = \rn \ltimes_\A \rn^3$ & &
$
\begin{bmatrix}
1 & 0 & 0\\
0 & 0 & 1\\
0 & 0 & 0
\end{bmatrix}
$ &
$
\begin{bmatrix}
e^{t} & 0 & 0 & x_1\\
0 & 1 & t & x_2\\
0 & 0 & 1 & x_3\\
0 & 0 & 0 & 1
\end{bmatrix}
$
\medskip \\
$\mathrm{G}_{4.4} = \rn \ltimes_\A \rn^3$ & &
$
\begin{bmatrix}
1 & 1 & 0\\
0 & 1 & 1\\
0 & 0 & 1
\end{bmatrix}
$ &
$
\begin{bmatrix}
e^{t} & te^{t} & \frac{1}{2}t^2e^{t} & x_1\\
0 & e^{t} & te^{t} & x_2\\
0 & 0 & e^{t} & x_3\\
0 & 0 & 0 & 1
\end{bmatrix}
$ \medskip \\
$\mathrm{G}_{4.5}^{\alpha\beta\gamma} = \rn \ltimes_\A \rn^3$ & $\alpha\beta\gamma \not= 0$ &
$
\begin{bmatrix}
\alpha & 0 & 0\\
0 & \beta & 0\\
0 & 0 & \gamma
\end{bmatrix}
$ &
$
\begin{bmatrix}
e^{\alpha t} & 0 & 0 & x_1\\
0 & e^{\beta t} & 0 & x_2\\
0 & 0 & e^{\gamma t} & x_3\\
0 & 0 & 0 & 1
\end{bmatrix}
$ \medskip \\
$\mathrm{G}_{4.6}^{\alpha\beta} = \rn \ltimes_\A \rn^3$ & $\alpha > 0, \beta \in \rn$ &
$
\begin{bmatrix}
\alpha & 0 & 0\\
0 & \beta & 1\\
0 & -1 & \beta
\end{bmatrix}
$ &
$
\begin{bmatrix}
e^{\alpha t} & 0 & 0 & x_1\\
0 & e^{\beta t}\cos(t) & e^{\beta t}\sin(t) & x_2\\
0 & -e^{\beta t}\sin(t) & e^{\beta t}\cos(t) & x_3\\
0 & 0 & 0 & 1
\end{bmatrix}
$ \medskip \\
$\mathrm{G}_{4.7} = \rn \ltimes_\A \mathrm{H}^3$ & &
$
\begin{bmatrix}
1 & 1\\
0 & 1
\end{bmatrix}
$ &
$\begin{bmatrix}
e^{2t} & -\frac{x_2}{2}e^{t} & \frac{x_1-tx_2}{2}e^t & \xi\\[0.05cm]
0 & e^t & te^t & x_1\\
0 & 0 & e^t & x_2\\
0 & 0 & 0 & 1
\end{bmatrix}
$ \medskip \\
$\mathrm{G}_{4.8}^\alpha = \rn \ltimes_\A \mathrm{H}^3$ & $\alpha \in [-1,1]$ &
$
\begin{bmatrix}
1 & 0\\
0 & \alpha
\end{bmatrix}
$ &
$\begin{bmatrix}
e^{(1+\alpha) t} & -\frac{x_2}{2}e^{t} & \frac{x_1}{2}e^{\alpha t} & \xi\\[0.075cm]
0 & e^{t} & 0 & x_1\\
0 & 0 & e^{\alpha t} & x_2\\
0 & 0 & 0 & 1
\end{bmatrix}
$ \medskip \\
$\mathrm{G}_{4.9}^\alpha = \rn \ltimes_\A \mathrm{H}^3$ & $\alpha \geq 0$ &
$
\begin{bmatrix}
\alpha & 1\\
-1 & \alpha
\end{bmatrix}
$ &
$
\begin{bmatrix}
e^{2\alpha t} & -\frac{x_2\cos(t)+x_1\sin(t)}{2}e^{\alpha t} & \frac{x_1\cos(t)-x_2\sin(t)}{2}e^{\alpha t} & \xi & 0\\[0.075cm]
0 & e^{\alpha t}\cos(t) & e^{\alpha t}\sin(t) & x_1 & 0\\
0 & -e^{\alpha t}\sin(t) & e^{\alpha t}\cos(t) & x_2 & 0\\
0 & 0 & 0 & 1 & 0\\
0 & 0 & 0 & 0 & e^t
\end{bmatrix}
$ \medskip \\
$\mathrm{G}_{4.10} = \rn^2 \ltimes_\A \rn^2$ &  &
$
\begin{bmatrix}
-1 & 0\\
0 & -1
\end{bmatrix},\;
\begin{bmatrix}
0 & 1\\
-1 & 0
\end{bmatrix}
$ &
$
\begin{bmatrix}
e^{-t_1}\cos(t_2) & e^{-t_1}\sin(t_2) & x_1 & 0\\
-e^{-t_1}\sin(t_2) & e^{-t_1}\cos(t_2) & x_2 & 0\\
0 & 0 & 1 & 0\\
0 & 0 & 0 & e^{t_2}\\
\end{bmatrix}
$ \\ \bottomrule
\end{tabular}
}

\vskip0.4cm

\caption{\cite{Pop-Boy-etal,Big-Rem} Linear representations of the four-dimensional Lie groups}
\label{table-groups-4dim}
\end{table}

\vspace*{\fill}

\end{document}